\documentclass[12pt,reqno]{amsart}
\usepackage{amsmath}
\usepackage{amssymb}
\usepackage{amstext}
\usepackage{a4wide}
\usepackage{graphicx}
\allowdisplaybreaks \numberwithin{equation}{section}
\usepackage{color}
\usepackage{cases}

\numberwithin{equation}{section}

\newtheorem{theorem}{Theorem}[section]

\newtheorem{lemma}[theorem]{Lemma}

\theoremstyle{definition}

\newtheorem{definition}[theorem]{Definition}

\theoremstyle{remark}
\newtheorem{remark}[theorem]{Remark}

\begin{document}

\title
[Steady vortex flows of perturbation type]{Steady vortex flows of perturbation type in a planar bounded domain}

 \author{Daomin Cao, Guodong Wang,  Weicheng Zhan}
\address{Institute of Applied Mathematics, Chinese Academy of Sciences, Beijing 100190, and University of Chinese Academy of Sciences, Beijing 100049,  P.R. China}
\email{dmcao@amt.ac.cn}
\address{Institute for Advanced Study in Mathematics, Harbin Institute of Technology, Harbin 150001, P.R. China}
\email{wangguodong14@mails.ucas.edu.cn}
\address{Institute of Applied Mathematics, Chinese Academy of Sciences, Beijing 100190, and University of Chinese Academy of Sciences, Beijing 100049,  P.R. China}
\email{zhanweicheng16@mails.ucas.ac.cn}


\begin{abstract}
In this paper, we investigate steady Euler flows in a two-dimensional bounded domain. By an adaption of the vorticity method, we prove that for any nonconstant harmonic function $q$, which corresponds to a nontrivial irrotational flow, there exists a family of steady Euler flows with small circulation in which the vorticity is continuous and supported in a small neighborhood of the set of maximum points of $q$ near the boundary, and the corresponding stream function satisfies a semilinear elliptic equation with a given profile function.
Moreover, if $q$ has $k$ isolated maximum points $\{\bar{x}_1,\cdot\cdot\cdot,\bar{x}_k\}$ on the boundary, we show that there exists a family of steady Euler flows whose vorticity is continuous and supported in $k$ disjoint regions of small diameter, and each of them is contained in a small neighborhood of $\bar{x}_i$, and in each of these small regions the stream function satisfies a semilinear elliptic equation with a given profile function.

\end{abstract}

\maketitle

\section{Introduction and main results}
In this paper, we study equilibrium states of the incompressible Euler system with fixed boundary condition in a two-dimensional bounded domain. The system of equations is as follows
\begin{align}\label{euler}
\begin{cases}
(\mathbf{v}\cdot\nabla)\mathbf{v}=-\nabla P,&x=(x_1,x_2)\in D,\\
 \nabla\cdot\mathbf{v}=0,\\
 \mathbf v\cdot\mathbf n=g,&x\in\partial D,
\end{cases}
\end{align}
where $D$ is a simply-connected and bounded domain in $\mathbb R^2$ with smooth boundary, $\mathbf{v}=(v_1,v_2)$ is the velocity field, $P$ is the scalar pressure, and $g$ is a given function defined on $\partial D$ satisfying the following compatibility condition
\begin{equation}
\int_{\partial D}gd\sigma=0,
\end{equation}
where $d\sigma$ is the area unit on $\partial D$. If $g\equiv0,$ which is usually called the impermeability boundary condition,
then there is no matter flow through the boundary.

 The scalar vorticity of the fluid is defined by
\[\omega:=\partial_{x_1}v_2-\partial_{x_2}v_1,\]
which describes the relative rotation between two nearby fluid particles. The scalar vorticity is an elementary quantity and plays an important role in the study of 2D Euler flows. See \cite{MB} for example.

We can simplify system \eqref{euler} as single equation of $\omega$ as follows. First we take $``\nabla\times"$ on both sides of the first equation of \eqref{euler} to obtain
\begin{equation}\label{trans}
\nabla\omega\cdot\mathbf v=0,\quad x\in D.
\end{equation}
On the other hand, by the divergence-free condition $\nabla\cdot \mathbf{v}=0$ and Green's theorem, it is not hard to check that there exists some function $\psi$, called the stream function, such that
\[\mathbf{v}=(\partial_{x_2}\psi,-\partial_{x_1}\psi).\]
 For simplicity, throughout this paper we will make constant use of the symbol $\mathbf b^\perp$ to denote the clockwise rotation through $\pi/2$ of some planar vector $\mathbf b$, and we will also write $\nabla^\perp f=(\nabla f)^\perp$ for some function $f$. Thus we have
$\mathbf{v}=\nabla^\perp\psi.$
 It can be easily verified that the stream function satisfies
 \begin{equation}
 \begin{cases}
 -\Delta\psi=\omega,&x\in D,\\
 \nabla^\perp\psi\cdot\mathbf n=g,&x\in\partial D.
 \end{cases}
 \end{equation}
Now let $q$ satisfy
  \begin{equation}
 \begin{cases}
 -\Delta q=0,&x\in D,\\
 \nabla^\perp q\cdot\mathbf n=g,&x\in\partial D.
 \end{cases}
 \end{equation}
Then up to a constant $\psi$ is solved by
\[\psi=q+\mathcal{G}\omega,\,\,\,\mathcal{G}\omega(x):=\int_DG(x,y)\omega(y)dy,\]
where $G$ is the Green's function for $-\Delta$ in $D$ with zero Dirichlet data. Therefore $\mathbf{v}$ can be recovered from $\omega$ as follows
\begin{equation}\label{recover}
\mathbf{v}=\nabla^\perp(q+\mathcal{G}\omega).
\end{equation}
Combining \eqref{trans} and \eqref{recover} we get the following vorticity form of the Euler system
\begin{equation}\label{vor}
\nabla\omega\cdot\nabla^\perp(q+\mathcal{G}\omega)=0.
\end{equation}
To deal with flows whose vorticity is not $C^1$, it is necessary to interpret $\eqref{vor}$ in the following weak sense.

\begin{definition}\label{wsol}
Let $\omega\in L^\infty(D)$. If for any $\phi\in C^\infty_c(D)$ there holds
\begin{equation}\label{wint}
    \int_D\omega\nabla^\perp(\mathcal G\omega+q)\cdot\nabla\phi dx=0,
\end{equation}
then $\omega$ is called a weak solution to the vorticity equation \eqref{vor}.
\end{definition}

It is easy to see that $\omega\equiv0$ is always a solution for any $q$, which is called an irrotational flow. In this case, the velocity field is given by $\mathbf v=\nabla^\perp q$, and the pressure is given by $P=-\frac{1}{2}|\nabla q|^2$. Throughout this paper, we assume that $q$ is nonconstant, which means that this irrotational flow is nontrivial (that is, $\mathbf v\not\equiv0$). We will be focused on the existence of solutions to the Euler system ``near" this nontrivial irrotational flow with nonvanishing vorticity. More precisely, we shall seek steady Euler flow whose vorticity is supported in a finite number of regions of small diameter, and the circulation (that is, the integral of the vorticity) in each of these regions is very small.

Existence of steady Euler flows with nonvanishing vorticity and their stability are very important topics in fluid dynamics, especially when the vorticity is supported in several small regions. Here we recall two types of these flows that have been studied extensively in the literature. The first one is of desingularization type. It consists of constructing solutions in which the vorticity is almost the sum of a finite number of delta measures (also called point vortices). According to the point vortex model (see \cite{L} or \cite{MPu} for a general discussion), the evolution of $k$ point vortices is governed by the following ODE system
\begin{equation}\label{kr}
\frac{dx_i(t)}{dt}=-\kappa_i\nabla_x^\perp h(x_i(t),x_i(t))+ \sum_{j=1,j\neq i}^k\kappa_j\nabla_x^\perp G(x_j(t),x_i(t)),\;i=1,\cdot\cdot\cdot,k.
\end{equation}
where $x_i(t)\in D$ and $\kappa_i\in\mathbb R\setminus\{0\}$ are the location and the strength of the $i$-th point vortex respectively, and $h(\cdot,\cdot)$ is the regular part of the Green's function, that is,
\[h(x,y)=-\frac{1}{2\pi}\ln|x-y|-G(x,y),\,\,x,y\in D.\]
By \eqref{kr}, the equilibrium state of a system of point vortices is a critical point, say $(\tilde{x}_1,\cdot\cdot\cdot,\tilde{x}_k)$, of the following Kirchhoff-Routh function
 \begin{equation*}
{\mathcal W}_k(x_1,\cdot\cdot\cdot,x_k):=- \sum_{i\neq j, 1\leq i,j\leq k}\kappa_i\kappa_jG(x_i,x_j)+ \sum_{i=1}^k\kappa_i^2h(x_i,x_i),\,\,x_i\in D,\,\, x_i\neq x_j \mbox{ if } i\neq j.
\end{equation*}
 Desingularization of point vortices is to construct a family of steady solutions of the Euler system whose vorticity is supported in $k$ disjoint regions of small diameter, each of which is contained in a small neighborhood of $\tilde{x}_i$, and in each of these small regions the circulation of the flow is $\kappa_i$, which is prescribed. During the past decades, desingularization has became a very efficient tool to establish dynamically possible equilibria of 2D Euler flows and analyze their linear and nonlinear stability. See \cite{CLW}\cite{CPY}\cite{CPY2}\cite{CW0}\cite{CW}\cite{SV}\cite{T}\cite{W} and the references therein. The other kind of steady Euler flows is of perturbations type, which is exactly the kind of flows that we are concerned with in this paper. As the name suggests, it is about seeking new Euler flows near a given reference flow, which is usually an irrotational flow. Here we recall several relevant results. In \cite{LYY}, Li--Yan--Yang studied the following elliptic problem
 \begin{equation}\label{lyy}
 \begin{cases}
 -\Delta u=\lambda f(u+q),&x\in D,\\
 u=0,&x\in\partial D,
 \end{cases}
 \end{equation}
 where $\lambda$ is a parameter that is large enough. Note that any smooth solution to \eqref{lyy} produces a steady Euler flow with the vorticity given by $\omega=\lambda f(u+q).$
When the profile function $f$ is $C^1$ and satisfies some additional assumptions, Li--Yan--Yang proved existence of solutions for given $\lambda$ by using the mountain pass lemma and showed that the ``vortex core" $\{x\in D\mid u^\lambda(x)+q(x)>0\}$ shrinks to a global maximum point of $q$ as $\lambda\to+\infty.$ Moreover, they showed that for any given isolated maximum point $x_0$ of $q$, there exists a family of solutions to \eqref{lyy} such that the ``vortex core" shrinks to $x_0$ as $\lambda\to+\infty.$ Unfortunately, with the method in \cite{LYY}, it is hard to construct solutions with separated ``vortex cores".
In \cite{LP}, Li--Peng considered a special case of \eqref{lyy}, that is, $f(s)=s^p_+$ with $p\in(1,+\infty).$ Based on the reduction method, they proved that for $k$ isolated local maximum points of $q$ on $\partial D$, there exists a solution to \eqref{lyy} such that the ``vortex core" has $k$ components that shrinks to these $k$ points respectively as $\lambda\to+\infty$. Note that both results in \cite{LP} and \cite{LYY} are concerned with smooth Euler flows. When the profile function $f$ has a jump continuity, which corresponds to the vortex patch solution, steady Euler flows with separated ``vortex cores" have been established by Cao--Wang--Zhan in \cite{CWZ} via an entirely different method.
 An important feature of the steady Euler flows of perturbation type obtained in \cite{CWZ}\cite{LP}\cite{LYY}, in contrast to the those of desingularization type, is that the circulation vanishes as the parameter changes.

 Our aim in this paper is to generalize the results in \cite{LP} and \cite{LYY} to more general steady Euler flows.
To state our main results, we need some assumptions on the profile function.
 Let $f:\mathbb R\to\mathbb R$ be a function. We make the following assumptions.
 \begin{itemize}
\item[(H1)] $f(s)=0$ for $s\leq 0$;
\item[(H2)]$f$ is continuous and strictly increasing in $[0,+\infty)$;
\item[(H3)]  There exists $\delta_0\in(0,1)$ such that
\[\int_0^sf(r)dr\leq \delta_0f(s)s,\,\,\forall\,s\geq0.\]

\end{itemize}
Note that (H2) and (H3) imply $\lim_{s\to+\infty}f(s)=+\infty$. By using the identity $\int_0^sf(r)dr+\int_0^{f(s)}f^{-1}(r)dr=sf(s)$ for all $s\geq0,$ one can easily check that (H3) is in fact equivalent to
\begin{itemize}
 \item[ (H3)$'$] There exists $\delta_1\in(0,1)$ such that
\[F(s) \ge \delta_1 s f^{-1}(s),\,\,\forall\,s\geq0,\]
where $f^{-1}$ is defined as the inverse function of $f$ in $[0,+\infty)$ and $f^{-1}\equiv0$ in $(-\infty,0]$, and $F(s)=\int_0^sf^{-1}(r)dr$.
\end{itemize}
 Note that $f(s)=s_+^p$ with $p\in(0,+\infty)$ satisfies (H1)--(H3).

Except for the profile function, we also need some assumptions on the $L^\infty$ norm of the vorticity. Let $\Lambda:\mathbb R_+\to\mathbb R_+$ be a function. We make the following assumptions.
 \begin{itemize}
\item[(A1)] $\lim_{s\to0^+}\frac{\Lambda(s)}{s}=+\infty$;
\item[(A2)] There exists some $\gamma_0>0$ such that $\lim_{s\to0^+}\Lambda(s)s^{\gamma_0}=0$.
\end{itemize}

Our first result can be stated as follows.
\begin{theorem}\label{single}
Let $\Lambda:\mathbb R_+\to\mathbb R_+$ be a function satisfying (A1)(A2) and
 $q\in C^2(D)\cap C^1(\overline{D})$ be a harmonic function. Set $\mathcal{S}:=\{x\in \overline{D}\mid q(x)=\max_{ \overline{D}}q\}$. Then there exists $\kappa_0>0$ such that
for any $\kappa\in(0,\kappa_0)$, there exists a weak solution $\omega^\kappa$ to \eqref{vor} having the form
\begin{equation}\label{1-12}
  \omega^\kappa=\Lambda(\kappa) f(\mathcal G\omega^\kappa+q-\mu^\kappa),\,\,\int_D\omega^\kappa dx=\kappa,
\end{equation}
for some $\mu^\kappa\in\mathbb{R}$ depending on $\kappa$.
Furthermore, if $q$ is not a constant, then $\mathcal{S}\subset\partial D$ and $supp(\omega^\kappa)$ approaches $\mathcal{S}$ as $\kappa\rightarrow0^+$, or equivalently, for any $\delta>0$, there exists $\kappa_1>0$, such that for any $\kappa<\kappa_1$, there holds
\begin{equation}\label{1-14}
supp(\omega^\kappa)\subset\mathcal{S}_\delta:=\{x\in D\mid dist(x,\mathcal{S})<\delta\}.
\end{equation}
\end{theorem}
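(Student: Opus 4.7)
The plan is to adapt the vorticity method in its variational form: $\omega^\kappa$ will be produced as a maximizer of a constrained energy. Introduce the admissible class
\[
M_\kappa=\Bigl\{\omega\in L^\infty(D):\ 0\le\omega\le\Lambda(\kappa),\ \int_D\omega\,dx=\kappa\Bigr\}
\]
and the constrained energy
\[
E_\kappa(\omega)=\tfrac12\int_D\omega\mathcal G\omega\,dx+\int_D q\omega\,dx-\Lambda(\kappa)\int_D F\bigl(\omega/\Lambda(\kappa)\bigr)\,dx,
\]
with $F(s)=\int_0^s f^{-1}(r)\,dr$ as in (H3)$'$. A formal variation of $E_\kappa$ on $M_\kappa$ together with a Lagrange multiplier $\mu^\kappa$ for the mass constraint produces exactly the identity $\omega^\kappa=\Lambda(\kappa)f(\mathcal G\omega^\kappa+q-\mu^\kappa)$, so the theorem reduces to (i) solving the variational problem and (ii) controlling the support of the maximizer.

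Existence of a maximizer $\omega^\kappa$ follows from the direct method: $E_\kappa$ is bounded above on $M_\kappa$, and from a maximizing sequence I extract a weak-$\ast$ convergent subsequence in $L^\infty(D)$ whose limit lies in $M_\kappa$. The quadratic term is continuous by compactness of $\mathcal G:L^{4/3}(D)\to L^4(D)$, the linear term is trivially continuous, and $\int F(\omega/\Lambda(\kappa))\,dx$ is weakly lower semicontinuous by convexity of $F$. Admissible bathtub-type perturbations inside $M_\kappa$ then yield the Lagrange multiplier $\mu^\kappa$ and the identity $\omega^\kappa=\Lambda(\kappa)f(\psi^\kappa-\mu^\kappa)$ on $\{\omega^\kappa<\Lambda(\kappa)\}$, where $\psi^\kappa:=\mathcal G\omega^\kappa+q$. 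To rule out saturation of the cap I interpolate $\|\omega^\kappa\|_{L^p}\le\Lambda(\kappa)^{1-1/p}\kappa^{1/p}$ with standard estimates for $\mathcal G$ and absorb the $\Lambda$-factor via (A2), obtaining $\|\mathcal G\omega^\kappa\|_\infty\to 0$; since $\Lambda(\kappa)\to\infty$ by (A1) while $\psi^\kappa-\mu^\kappa$ stays bounded, the saturated region would have mass incompatible with $\int\omega^\kappa=\kappa$, forcing \eqref{1-12} a.e.\ on $D$. The weak equation \eqref{wint} is then immediate: writing $\omega^\kappa=h(\psi^\kappa)$ for the continuous function $h(t)=\Lambda(\kappa)f(t-\mu^\kappa)$ and letting $H$ be any antiderivative of $h$, one has $\omega^\kappa\nabla^\perp\psi^\kappa=\nabla^\perp H(\psi^\kappa)$ which is divergence-free, so its duality with $\nabla\phi$ vanishes for every $\phi\in C_c^\infty(D)$.

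For the support collapse, the strong maximum principle applied to the nonconstant harmonic function $q$ forces $\mathcal S\subset\partial D$. The core of the argument is to establish
\[
\int_D q\omega^\kappa\,dx=\kappa\max_{\overline D}q-o(\kappa)\quad\text{as }\kappa\to 0^+,
\]
which bounds the mass of $\omega^\kappa$ in $\{q<\max q-\delta\}$ by $o(\kappa)$ for each $\delta>0$ and pins $\mu^\kappa\to\max_{\overline D}q$. The lower bound $E_\kappa(\omega^\kappa)\ge\kappa\max_{\overline D}q-o(\kappa)$ comes from testing $E_\kappa$ against $\Lambda(\kappa)\chi_A$ for $A\subset\overline D$ of measure $\kappa/\Lambda(\kappa)$ sitting in an arbitrarily small neighborhood of a fixed $\bar x\in\mathcal S$. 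A matching upper bound $E_\kappa(\omega^\kappa)\le\int q\omega^\kappa\,dx+o(\kappa)$ follows by rewriting $\Lambda(\kappa)F(\omega^\kappa/\Lambda(\kappa))=\omega^\kappa(\psi^\kappa-\mu^\kappa)-\Lambda(\kappa)\Phi(\psi^\kappa-\mu^\kappa)$ with $\Phi(s)=\int_0^s f(r)\,dr$ the Legendre dual of $F$ and using $\Lambda(\kappa)\Phi(\psi^\kappa-\mu^\kappa)\le\omega^\kappa(\psi^\kappa-\mu^\kappa)$, together with $\tfrac12\int\omega^\kappa\mathcal G\omega^\kappa\,dx=o(\kappa)$. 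Once $\mu^\kappa\to\max_{\overline D}q$, the Euler-Lagrange relation $\omega^\kappa(x)>0\Rightarrow q(x)\ge\mu^\kappa-\mathcal G\omega^\kappa(x)$, combined with $\|\mathcal G\omega^\kappa\|_\infty\to 0$, upgrades this $L^1$-level mass concentration into the pointwise inclusion \eqref{1-14}. The main obstacle is precisely this localization step: because $\mathcal S$ lies on $\partial D$, the test set $A$ must sit in a thin boundary layer where the Green's kernel degenerates, and every error term -- the quadratic self-interaction, the nonlinear penalty, and the defect $\max_{\overline D}q-q$ on $A$ -- must simultaneously be shown to be $o(\kappa)$ under the precise interplay of (A1) and (A2).
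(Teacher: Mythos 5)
Your variational setup is exactly the paper's: the same functional $\mathcal E$ over the same class $\mathcal M^\kappa$, the same direct-method existence argument (convexity of $F$ for lower semicontinuity), the same bathtub-type first variation, the same interpolation estimate giving $\|\mathcal G\omega^\kappa\|_{L^\infty(D)}=o(1)$ via (A2), and the same boundary test function for the energy lower bound. Your verification of the weak equation by writing $\omega^\kappa\nabla^\perp\psi^\kappa=\nabla^\perp H(\psi^\kappa)$ with $H'=\Lambda(\kappa)f(\cdot-\mu^\kappa)$ is a legitimate and arguably cleaner alternative to the paper's area-preserving flow-map variation. However, there are two genuine gaps at the crux. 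First, your elimination of the saturated set $\{\omega^\kappa=\Lambda(\kappa)\}$ does not work as stated: (A1) asserts $\Lambda(\kappa)/\kappa\to\infty$, \emph{not} $\Lambda(\kappa)\to\infty$, and there is no ``mass incompatibility'' to exploit --- the constraint $\int_D\omega^\kappa\,dx=\kappa$ automatically forces $|\{\omega^\kappa=\Lambda(\kappa)\}|\le\kappa/\Lambda(\kappa)$, which is small but not zero, and on that set $\omega^\kappa=\Lambda(\kappa)$ need not coincide with $\Lambda(\kappa)f(\psi^\kappa-\mu^\kappa)$. The set is empty only because $\psi^\kappa-\mu^\kappa\le\|\mathcal G\omega^\kappa\|_{L^\infty(D)}+\max_{\overline D}q-\mu^\kappa=o(1)<f^{-1}(1)$, and this requires the lower bound $\mu^\kappa\ge\max_{\overline D}q-o(1)$, which in your outline only appears later; as ordered, the argument is circular.

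Second, and more seriously, that lower bound on $\mu^\kappa$ is asserted rather than proved. The mass concentration $\int_D(\max_{\overline D}q-q)\,\omega^\kappa\,dx=o(\kappa)$, which you do obtain correctly from the matched energy bounds, constrains where the vorticity lives but does not by itself ``pin'' the level $\mu^\kappa$. This is precisely where the paper does its hardest work: it writes $\mathcal E(\omega^\kappa)=\kappa\mu^\kappa+\int_D\omega^\kappa(\psi^\kappa-\mu^\kappa)\,dx+o(\kappa)$ and proves $\int_D\omega^\kappa(\psi^\kappa-\mu^\kappa)\,dx=o(\kappa)$ by splitting into the region where $\psi^\kappa-\mu^\kappa=f^{-1}(\Lambda^{-1}(\kappa)\omega^\kappa)$ --- controlled via (H3)$'$ and $\mathcal F(\omega^\kappa)=o(\kappa)$ --- and the saturated region, controlled by a H\"older--Sobolev estimate on $(\psi^\kappa-\mu^\kappa-f^{-1}(1))_+$. (A shortcut is available: if $\mu^{\kappa_j}\le\max_{\overline D}q-2\epsilon$ along a sequence, then $\psi^{\kappa_j}-\mu^{\kappa_j}\ge\epsilon-o(1)$ on the fixed positive-measure set $\{x\in D\mid q(x)>\max_{\overline D}q-\epsilon\}$, so the variational inequalities force $\omega^{\kappa_j}\ge\Lambda(\kappa_j)\min\{f(\epsilon/2),1\}$ there, whence $\kappa_j\ge c\,\Lambda(\kappa_j)$, contradicting (A1).) One of these arguments must be supplied, and your saturation step rerun after it; without that, the form \eqref{1-12} and the localization \eqref{1-14} are not established.
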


Note that in this paper we will denote by supp$(g)$ the essential support of some measurable function $g$, that is, the complement of the union of all open sets in which $g$ vanishes. See \S 1.5 in \cite{LL} for example.

Our second theorem shows that each finite collection of isolated maximum points of $q$ gives rise to a family of steady Euler flows with multiple ``vortex cores" of small diameter. To state the theorem concisely, we introduce the notion of $\alpha$-uniform corn. Let $k$ be a fixed positive integer and $\alpha$ be a fixed positive number. Define the $\alpha$-uniform cone as follows
\[\mathbb{K}^{\alpha}:=\{\vec{\kappa}\in \mathbb R^k\mid\vec{\kappa}=(\kappa_1,\cdot\cdot\cdot,\kappa_k), \kappa_1,\cdot\cdot\cdot,\kappa_k>0, \max\left\{{\kappa_i}/{\kappa_j}\mid i,j=1,\cdot\cdot\cdot,k\right\}\leq\alpha\}.\]
We also define the norm of $\vec{\kappa}=(\kappa_1,\cdot\cdot\cdot,\kappa_k)\in\mathbb{K}^{\alpha}$ by
\[|\vec{\kappa}|:=\sum_{i=1}^k\kappa_i.\]
It is easy to check that for any $\vec{\kappa}=(\kappa_1,\cdot\cdot\cdot,\kappa_k)\in\mathbb{K}^{\alpha}$, there holds
\[\frac{1}{k\alpha}|\vec{\kappa}|\leq\kappa_i\leq|\vec{\kappa}|,\,\,\forall\,i\in\{1,\cdot\cdot\cdot,k\}.\]
\begin{theorem}\label{multiple}
Let $\Lambda_i:\mathbb R_+\to\mathbb R_+,$ $i=1\cdot\cdot\cdot,k,$ be $k$ functions satisfying
(A1)(A2), $f_i:\mathbb R\to\mathbb R,$ $i=1\cdot\cdot\cdot,k,$ be $k$ functions satisfying (H1)--(H3), and
 $q\in C^2(D)\cap C^1(\overline{D})$ be a harmonic function. Assume that $q$ has $k$ isolated local maximum points $\bar{x}_i,\cdot\cdot\cdot,\bar{x}_k$ on $\partial D$. Then there exists $\delta_0>0$, such that
for any $\vec{\kappa}\in\mathbb K^\alpha$ with $|\vec{\kappa}|<\delta_0$, there exists a weak solution $\omega^{\vec\kappa}$ to \eqref{vor} having the form
\begin{equation}\label{1-12}
  \omega^{\vec\kappa}=\sum_{i=1}^k\omega^{\vec\kappa}_i,\,\,\omega^{\vec\kappa}_i=\Lambda_i(\kappa_i) f_i(\mathcal G\omega^{\vec\kappa}+q-\mu_i^{\vec\kappa})I_{B_{r_0}(\bar{x}_i)},\,\,\int_D\omega^{\vec\kappa}_idx=\kappa_i,
\end{equation}
where $\mu_i^{\vec\kappa}$ is a real number depending on ${\vec\kappa}$, and $r_0$ is a sufficiently small (independent of $\vec\kappa$) positive number such that $B_{r_0}(\bar x_i)\cap B_{r_0}(\bar x_j)=\varnothing$ if $i\neq j$.
Furthermore, for each $i\in\{1,\cdot\cdot\cdot,k\}$, $supp(\omega_i^{\vec\kappa})$ shrinks to $\bar{x}_i$ as $|{\vec\kappa}|\rightarrow0^+$, or equivalently,
\begin{equation}\label{1-14}
supp(\omega_i^{\vec\kappa})\subset B_{o(1)}(\bar{x}_i) \mbox{ as } |\vec\kappa|\to0^+.
\end{equation}
\end{theorem}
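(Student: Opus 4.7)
The plan is to execute a constrained variational argument parallel to the proof of Theorem~\ref{single}, but with an admissible class that separately prescribes the mass and the support of each "vortex core". Fix $r_0>0$ small enough that the balls $B_{r_0}(\bar x_i)$ are pairwise disjoint and each $\bar x_i$ is the strict maximum of $q$ over $\overline{B_{r_0}(\bar x_i)}\cap\overline D$; this is possible since the $\bar x_i$ are isolated local maxima. Set
\[
\mathcal A^{\vec\kappa}:=\Bigl\{\omega=\sum_{i=1}^k\omega_i\;:\;\omega_i\ge 0,\;\mathrm{supp}(\omega_i)\subset\overline{B_{r_0}(\bar x_i)}\cap\overline D,\;\int_D\omega_i\,dx=\kappa_i\Bigr\},
\]
and, with $F_i(s)=\int_0^sf_i^{-1}(r)\,dr$ as in (H3)$'$, define
\[
E(\omega):=\tfrac12\int_D\omega\,\mathcal G\omega\,dx+\int_Dq\,\omega\,dx-\sum_{i=1}^k\Lambda_i(\kappa_i)\int_DF_i\!\bigl(\omega_i/\Lambda_i(\kappa_i)\bigr)\,dx.
\]
The candidate $\omega^{\vec\kappa}$ will be produced as a maximizer of $E$ over $\mathcal A^{\vec\kappa}$.

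The first step is to establish existence of a maximizer by the direct method. From (H3)$'$ together with an admissible test configuration, one obtains an a priori $L^p$ bound along maximizing sequences, yielding weak compactness; since $\mathcal G$ is a compact operator, the quadratic form $\int\omega\,\mathcal G\omega$ is weakly continuous, while $\omega\mapsto\int F_i(\omega_i/\Lambda_i)$ is weakly lower semicontinuous by convexity of $F_i$, so $E$ is weakly upper semicontinuous. The support restriction and the linear mass constraints pass to the weak limit, hence $E$ attains its supremum at some $\omega^{\vec\kappa}\in\mathcal A^{\vec\kappa}$. Testing with the admissible variations $\omega_i^{\vec\kappa}\mapsto\omega_i^{\vec\kappa}+t\eta_i$, where $\eta_i\in L^\infty(B_{r_0}(\bar x_i)\cap D)$ has zero mean and $\omega_i^{\vec\kappa}+t\eta_i\ge 0$, yields Lagrange multipliers $\mu_i^{\vec\kappa}\in\mathbb R$ with
\[
\mathcal G\omega^{\vec\kappa}+q-\mu_i^{\vec\kappa}=f_i^{-1}\bigl(\omega_i^{\vec\kappa}/\Lambda_i(\kappa_i)\bigr)\text{ on }\{\omega_i^{\vec\kappa}>0\},\qquad \mathcal G\omega^{\vec\kappa}+q\le\mu_i^{\vec\kappa}\text{ on }\{\omega_i^{\vec\kappa}=0\}\cap B_{r_0}(\bar x_i),
\]
which, combined with (H1), yields the profile identity in \eqref{1-12}.

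The core step is then to prove $\mathrm{supp}(\omega_i^{\vec\kappa})\subset B_{o(1)}(\bar x_i)$ as $|\vec\kappa|\to 0^+$. Inserting a carefully rescaled concentrated bump at each $\bar x_i$ (at the natural scale determined by $\kappa_i/\Lambda_i(\kappa_i)$, which vanishes by (A1)) into $E$ yields a quantitative lower bound on $E(\omega^{\vec\kappa})$. Conversely, substituting the profile equation back into $E$, integrating by parts, and using (H3) together with the mass constraint produces an upper bound on $E(\omega^{\vec\kappa})$ in terms of $\mu_i^{\vec\kappa}$ and the diameter of $\mathrm{supp}(\omega_i^{\vec\kappa})$. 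Matching the two bounds pins down the growth rate of $\mu_i^{\vec\kappa}$ and forces the positivity set $\{\mathcal G\omega^{\vec\kappa}+q\ge\mu_i^{\vec\kappa}\}\cap B_{r_0}(\bar x_i)$ to contract to $\{\bar x_i\}$; assumption (A2) ensures the penalty does not destroy this concentration, and the smoothness of $G$ off the diagonal reduces the cross-interaction $\mathcal G\omega_j^{\vec\kappa}|_{B_{r_0}(\bar x_i)}$, $j\ne i$, to a lower-order perturbation. Once the supports lie strictly inside the open balls, the indicator in \eqref{1-12} becomes automatic on the support and a standard chain-rule/approximation argument (approximating $f_i$ by $C^1$ profiles) confirms that $\omega^{\vec\kappa}$ satisfies \eqref{wint}.

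The principal difficulty is the concentration step. In contrast to Theorem~\ref{single}, where the vorticity is attracted to the \emph{global} maximum of $q$ by the unconstrained variational principle, here each $\bar x_i$ is only a local maximum on $\partial D$, so separation of the components must be actively enforced through the support constraint. The subtle point is then to show that for all sufficiently small $|\vec\kappa|$, $\mathrm{supp}(\omega_i^{\vec\kappa})$ is strictly contained in the open ball $B_{r_0}(\bar x_i)$; otherwise an additional multiplier associated with this boundary support constraint would contaminate the Euler--Lagrange equation and invalidate the profile identity claimed in \eqref{1-12}. Since $\bar x_i\in\partial D$, the Green's function degenerates as $x$ approaches the boundary, so the localization argument must be carried out in the boundary-adapted form already developed in the single-vortex proof of Theorem~\ref{single}, and then pieced together across the $k$ components using the smallness of the cross-interactions.
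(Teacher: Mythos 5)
Your overall architecture (support-constrained maximization on disjoint balls, Euler--Lagrange identity via admissible variations, concentration via matching upper and lower energy bounds, then the weak-solution property) is the same as the paper's, but there are two genuine gaps. First, your admissible class $\mathcal A^{\vec\kappa}$ drops the pointwise cap $0\le\omega_i\le\Lambda_i(\kappa_i)$ that the paper builds into $\mathcal N^{\vec\kappa}$, and your claimed ``a priori $L^p$ bound from (H3)$'$'' does not replace it. The penalty only controls $\sum_i\Lambda_i(\kappa_i)\int F_i(\omega_i/\Lambda_i(\kappa_i))$, and (H3)$'$ gives $F_i(s)\ge\delta_1 s f_i^{-1}(s)$ with $f_i^{-1}$ possibly growing arbitrarily slowly (e.g.\ doubly logarithmically, which is consistent with (H1)--(H3)). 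Testing with $\omega_i=MI_{B_\rho}$, $M\pi\rho^2=\kappa_i$, the quadratic term grows like $\kappa_i^2\ln M$ while the penalty is only of order $\kappa_i f_i^{-1}(M/\Lambda_i(\kappa_i))$, so for such profiles $E$ is unbounded above on $\mathcal A^{\vec\kappa}$ and no maximizer exists. The $L^\infty$ constraint is also what makes the class weak-star compact and what lets the bathtub principle produce the multiplier and the (a priori possible) patch part $\{\omega_i=\Lambda_i(\kappa_i)\}$; the paper must then separately prove that this patch part is empty for small $|\vec\kappa|$, a step absent from your outline because you never introduced the cap.

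Second, the decisive quantitative step --- the estimate $\mu_i^{\vec\kappa}=q(\bar x_i)+o(1)$ for \emph{each} $i$ --- is only gestured at. The energy comparison yields an inequality for the aggregate $\sum_i\kappa_i\mu_i^{\vec\kappa}\ge\sum_i\kappa_i q(\bar x_i)+o(|\vec\kappa|)$, and extracting the individual lower bounds $\mu_i^{\vec\kappa}\ge q(\bar x_i)+o(1)$ from it requires combining the individual upper bounds $\mu_j^{\vec\kappa}\le q(\bar x_j)+o(1)$ with the uniform cone condition $\vec\kappa\in\mathbb K^\alpha$ (so that $\kappa_i\ge|\vec\kappa|/(k\alpha)$ and the $o(|\vec\kappa|)$ error divided by $\kappa_i$ remains $o(1)$). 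This is precisely where the multi-vortex case differs from Theorem \ref{single}, and your proposal never uses $\mathbb K^\alpha$ at all. Relatedly, the proof that $\int_D\omega_i^{\vec\kappa}(\mathcal G\omega^{\vec\kappa}+q-\mu_i^{\vec\kappa})\,dx=o(|\vec\kappa|)$ (needed to convert the energy identity into the multiplier estimate) rests on a Sobolev/H\"older argument on the set $\{\omega_i^{\vec\kappa}=\Lambda_i(\kappa_i)\}$, which again presupposes the $L^\infty$ cap. Your final remark about the support constraint possibly becoming active at $\partial B_{r_0}(\bar x_i)$ is a correct concern, and the resolution (concentration forces the support strictly inside the ball, so the flow-map variations remain admissible) is the paper's as well; but it only becomes available after the multiplier estimate is actually established.
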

\begin{remark}
In Theorem \ref{single}, by the definition of $\alpha$-uniform cone, the circulation of the flow in each $B_{r_0}(\bar x_i)$ vanishes at a uniform rate as $|\vec\kappa|\to0^+$.
\end{remark}

Our strategy of proving Theorem \ref{single} and Theorem \ref{multiple} is basically motivated by \cite{CWZ} and \cite{T}, which is very different from the one used in \cite{LP}\cite{LYY}. In \cite{T}, Turkington studied existence of steady vortex patch solutions of desingularization type and analyzed their limiting behavior as the vorticity strength goes to infinity. More precisely, he considered the following variational problem for the vorticity: maximize the following functional
\begin{equation}
E(\omega):=\frac{1}{2}\int_D\int_DG(x,y)\omega(x)\omega(y)dxdy
\end{equation}
over the admissible class
\[{M}^\lambda:=\{\omega\in L^\infty(D)\mid 0\leq\omega\leq \lambda \mbox{ a.e. in }D,\int_D\omega(x) dx=1\}.\]
Here $\lambda$ is a large positive number representing the vorticity strength. Turkington proved that there exists a maximizer (may be non-unique) and each maximizer $\omega^\lambda$ must be a steady solution to the vorticity equation \eqref{vor} with $q\equiv0$ and has the form
\[\omega^\lambda=\lambda I{_{\{x\in D\mid\mathcal{G}\omega(x)>\mu^\lambda\}}},\]
where $I$ denotes the characteristic function and $\mu^\lambda$ is a real number (the Lagrange multiplier) depending on $\lambda$. Moreover, as $\lambda\to+\infty$, the support of $\omega^\lambda$ shrinks to a global minimum point of the Robin function of the domain. Turkington's method is now called the vorticity method, which was first established by Arnold in 1960s and developed by many authors in the past decades. See \cite{AK}\cite{B1}\cite{B2}\cite{B3}\cite{CW0}\cite{CW}\cite{EM}\cite{ET}\cite{T} and the references therein. Inspired by Turkington's paper, Cao--Wang--Zhan studied existence of steady vortex patch solutions to \eqref{vor} of perturbation type near isolated extreme points of a nonconstant harmonic function. They modified Turkington's approach by considering the maximization (or minimization) of the following functional
\begin{equation}\label{ql}
E_q(\omega)=\frac{1}{2}\int_D\int_DG(x,y)\omega(x)\omega(y)dxdy+\int_Dq(x)\omega(x)dx
\end{equation}
over
\[{N}^\kappa:=\{\omega\in L^\infty(D)\mid 0\leq\omega\leq 1 \mbox{ a.e. in }D,\int_D\omega(x) dx=\kappa\}.\]
where $\kappa$ is a small positive number. They showed that there exists a maximizer (or minimizer) and each maximizer (or minimizer) $\omega^\kappa$ has the form
\[\omega^\kappa=I_{\{x\in D\mid \mathcal{G}\omega^\kappa+q>\mu^\kappa\}}\,\,\,(\mbox{or }\omega^\kappa=I_{\{x\in D\mid \mathcal{G}\omega^\kappa+q<\mu^\kappa\}}),\]
where $\mu^\kappa\in \mathbb R$ depends on $\kappa$. Note that on the right side of \eqref{ql} the first term is quadratic and the second term is linear, so when $\kappa$ is small, the second term will play a dominant role.
Based on this observation, they showed that as $\kappa\to0^+$ the support of $supp(\omega^\kappa)$ shrinks to the set of maximum (or minimum) points of $q$ on the boundary. They also modified $N^\kappa$ to obtain steady vortex patch solutions with multiple ``vortex cores" near any given finite collection of isolated maximum (or minimum) points of $q$.

In this paper, we will use an adaption of the method in \cite{CWZ} to obtain more general steady Euler flows whose vorticity has no discontinuity. Let us first explain how we prove Theorem \ref{single}. Let $f:\mathbb R\to\mathbb R$ satisfy (H1)--(H3) and $\Lambda:\mathbb R_+\to\mathbb R_+$ satisfy (A1)(A2). We consider the maximization of
\begin{equation}\label{mathcale}
\mathcal E(\omega)=\frac{1}{2}\int_D\int_DG(x,y)\omega(x)\omega(y)dxdy+\int_Dq(x)\omega(x)dx -\Lambda(\kappa)\int_DF({\Lambda^{-1}(\kappa)}{\omega(x)})dx
\end{equation}
over
\begin{equation}\label{Mkappa}
\mathcal{M}^\kappa:=\{\omega\in L^\infty(D)\mid 0\leq\omega\leq\Lambda(\kappa), \int_D\omega(x)dx=\kappa\}.
\end{equation}
As compared to \cite{CWZ}, here we add the term $-\Lambda(\kappa)\int_DF({\Lambda^{-1}(\kappa)}{\omega(x)})dx$ in the functional, which is concave and plays a competitive role with the quadratic term.
As will be shown in Section 2, it is not hard to check that there exists a maximizer and every maximizer must be of the form \begin{equation}\label{form0}\omega^\kappa=\Lambda(\kappa)I_{\{x\in D\mid \mathcal{G}\omega^\kappa(x)+q(x)-\mu^\kappa\geq f^{-1}(1)\}}+\Lambda(\kappa)f(\mathcal{G}\omega^\kappa+q-\mu^\kappa)I_{\{x\in D\mid0<\mathcal{G}\omega^\kappa(x)+q(x)-\mu^\kappa<f^{-1}(1)\}}
\end{equation}
for some $\mu^\kappa\in\mathbb R.$ The key ingredient then is to estimate $\mu^\kappa$ as $\kappa\to0^+.$ We will show that $\mu^\kappa=\max_{x\in\overline{D}}q(x)+o(1)$ by a detailed analysis, from which we can easily show that the patch part $\{x\in D\mid \mathcal{G}\omega^\kappa(x)+q(x)-\mu^\kappa\geq f^{-1}(1)\}$ in \eqref{form0} is empty and
obtain the limiting location of supp$(\omega^\kappa)$. In this process, we will make constant use of (A1)(A2) and (H1)--(H3). As to the proof of Theorem \ref{multiple}, we need only to modify the admissible class just as in \cite{CWZ}.

It is also worth mentioning that there are also some perturbation results for 3D Euler flows. See \cite{Al}\cite{TX} and the references therein.

This paper is organized as follows. In Section 2, we prove Theorem \ref{single} by solving the maximization problem of $\mathcal{E}$ over $\mathcal M^\kappa$ and studying the limiting behavior of the maximizer. In Section 3, we modify the admissible class $\mathcal M^\kappa$ by adding some constrains on the support of the vorticity, and then consider a similar maximization problem and give the proof of Theorem \ref{multiple}.

\section{Proof of Theorem \ref{single}}
In this section we prove Theorem \ref{single}. To begin with, as mention in Section 1, we shall study the maximization problem of $\mathcal{E}$ over $\mathcal M^\kappa$.

\subsection{Maximization Problem}
Let  $\mathcal E$ and $\mathcal{M}^\kappa$ be defined by \eqref{mathcale} and \eqref{Mkappa}.
 For simplicity we denote
\[\mathcal F(\omega):=\Lambda(\kappa)\int_DF({\Lambda^{-1}(\kappa)}{\omega(x)})dx.\]
Since $F$ is a convex function, it is apparent that $\mathcal F$ is a convex functional over $\mathcal M^\kappa.$

\begin{lemma}\label{attain}
$\mathcal{E}$ is bounded from above and attains its maximum value over $\mathcal{M}^\kappa$.
\end{lemma}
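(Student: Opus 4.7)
The plan is to establish the upper bound first, then obtain a maximizer via the direct method in the calculus of variations.

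First I would bound $\mathcal{E}$ from above on $\mathcal M^\kappa$. Since $f^{-1}\geq 0$ on $\mathbb R$, we have $F\geq 0$ and hence $\mathcal F(\omega)\geq 0$, so the third term in $\mathcal{E}$ only makes a negative contribution. For the remaining terms, the linear term is controlled by $\|q\|_{L^\infty(D)}\kappa$. For the quadratic term, one uses that $G(\cdot,\cdot)$ has only a mild logarithmic singularity, so $\mathcal G$ maps $L^\infty(D)$ into $L^\infty(D)$ (indeed into $C(\overline D)$); thus $|\int_D\int_D G(x,y)\omega(x)\omega(y)\,dx\,dy|\leq C\kappa\Lambda(\kappa)$. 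Together these give a finite upper bound that depends on $\kappa$, so $\sup_{\mathcal M^\kappa}\mathcal E<+\infty$.

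Next I would apply the direct method. Pick a maximizing sequence $\{\omega_n\}\subset\mathcal M^\kappa$. Since $0\leq\omega_n\leq\Lambda(\kappa)$, the sequence is bounded in $L^\infty(D)$, so by the Banach--Alaoglu theorem there is $\omega^\kappa\in L^\infty(D)$ with $\omega_n\rightharpoonup\omega^\kappa$ in the weak-$*$ topology of $L^\infty(D)$, up to a subsequence. The pointwise bounds $0\leq\omega^\kappa\leq\Lambda(\kappa)$ pass to the weak-$*$ limit because the set defining them is convex and weak-$*$ closed, and testing against the constant $1\in L^1(D)$ preserves the mass constraint $\int_D\omega^\kappa\,dx=\kappa$. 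Hence $\omega^\kappa\in\mathcal M^\kappa$.

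It remains to prove $\mathcal E(\omega^\kappa)\geq\limsup_n\mathcal E(\omega_n)$. Testing $\omega_n\rightharpoonup\omega^\kappa$ against $q\in L^1(D)$ gives convergence of the linear term. For the quadratic term, the operator $\mathcal G:L^\infty(D)\to C(\overline D)$ is compact (by standard elliptic regularity for $-\Delta$ with zero Dirichlet data), so $\mathcal G\omega_n\to\mathcal G\omega^\kappa$ uniformly, and combined with the weak-$*$ convergence of $\omega_n$ this yields $\int_D\omega_n\mathcal G\omega_n\,dx\to\int_D\omega^\kappa\mathcal G\omega^\kappa\,dx$. Finally, $F$ is convex on $[0,\infty)$ since $F'=f^{-1}$ is nondecreasing (by (H2)), so $\omega\mapsto\mathcal F(\omega)$ is a convex functional on $L^\infty(D)$ that is norm-continuous, hence weakly lower semicontinuous; thus $-\mathcal F$ is weakly upper semicontinuous and $\mathcal F(\omega^\kappa)\leq\liminf_n\mathcal F(\omega_n)$. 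Putting these three facts together, $\mathcal E$ is weak-$*$ upper semicontinuous along $\{\omega_n\}$, and $\omega^\kappa$ is a maximizer.

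The only subtle step is the passage to the limit in the nonlinear term $\mathcal F$; here the essential inputs are that $F$ is convex and that convex continuous functionals on $L^\infty$ are weakly lower semicontinuous. Everything else reduces to standard compactness of $\mathcal G$ and the boundedness of $\mathcal M^\kappa$ in $L^\infty(D)$.
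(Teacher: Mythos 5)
Your proposal is correct and follows essentially the same route as the paper: a crude upper bound using $G\in L^1(D\times D)$ and $\mathcal F\ge 0$, then the direct method with weak-$*$ compactness of $\mathcal M^\kappa$, compactness of $\mathcal G$ for the quadratic term, and convexity of $F$ for the lower semicontinuity of $\mathcal F$. One point deserves more care: you justify $\mathcal F(\omega^\kappa)\le\liminf_n\mathcal F(\omega_n)$ by the theorem that convex norm-continuous functionals are \emph{weakly} lower semicontinuous, but the convergence you actually have is weak-$*$ in $L^\infty(D)$, i.e.\ with respect to $\sigma(L^\infty,L^1)$, which is strictly coarser than the weak topology $\sigma(L^\infty,(L^\infty)^*)$; weak l.s.c.\ does not formally imply weak-$*$ l.s.c. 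The standard repair, which is exactly what the paper does, is to note that since $D$ is bounded and the sequence lies in $\mathcal M^\kappa$, weak-$*$ convergence upgrades to weak convergence in $L^2(D)$, where $\mathcal F$ is convex and strongly continuous (indeed Lipschitz in $L^1$ on $\mathcal M^\kappa$), so Mazur's theorem gives the liminf inequality. With that one-line fix your argument is complete.
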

\begin{proof}
First for any $\omega\in \mathcal{M}^\kappa$, we have
\begin{align*}
 \mathcal E(\omega)&=\frac{1}{2}\int_D\omega(x)\mathcal G\omega(x)dx+\int_Dq(x)\omega(x)dx
  -\Lambda(\kappa)\int_DF({\Lambda^{-1}(\kappa)}{\omega(x)})dx\\
  &=\frac{1}{2}\int_D\int_D G(x,y)\omega(x)\omega(y)dxdy+\int_Dq(x)\omega(x)dx
  -\Lambda(\kappa)\int_DF({\Lambda^{-1}(\kappa)}{\omega(x)})dx\\
  &\leq \frac{\Lambda^2(\kappa)}{2}\int_D\int_D |G(x,y)|dxdy+\Lambda(\kappa)\|q\|_{L^\infty(D)},
\end{align*}
where we used $G(\cdot,\cdot)\in L^1(D\times D)$.
This implies that $\mathcal{E}$ is bounded from above over $\mathcal{M}^\kappa$.

Now let $\{\omega_{j}\}\subset \mathcal{M}^\kappa$ be a maximizing sequence, that is,
\[\lim_{j\to+\infty}\mathcal{E}(\omega_{j}) =\sup_{\omega\in \mathcal{M}^\kappa}\mathcal{E}({\omega}).\]
Since $\mathcal{M}^\kappa$ is bounded, thus a sequentially precompact subset of $L^\infty(D)$ in the weak star topology, we may assume, up to a subsequence, that $\omega_j\to\bar{\omega}$ weakly star in $L^\infty(D)$ as $j\to\infty$ for some $\bar{\omega}\in L^\infty(D)$. Besides, it is not hard to check that $\mathcal{M}^\kappa$ is closed in the weak star topology in $L^\infty(D)$ (see Lemma 3.1 in \cite{CWZ} for example), so $\bar{\omega}\in \mathcal{M}^\kappa.$
Now we show that $\bar{\omega}$ is in fact a maximizer of $\mathcal{E}$ over $\mathcal{M}^\kappa$.
First by elliptic regularity theory we have $\mathcal{G}\omega_{j}\to \mathcal{G}\bar{\omega}$ in $C^1(\overline{D})$, from which we deduce that
\begin{equation}\label{Evar}
\lim_{j\to\infty}\frac{1}{2}\int_D\omega_j(x)\mathcal G\omega_j(x)dx=\frac{1}{2}\int_D\bar\omega(x)\mathcal G\bar\omega(x)dx.
\end{equation}
Second, by the definition of weak star convergence we have
\begin{equation}\label{q}
\lim_{j\to\infty}\int_Dq(x)\omega_j(x)dx=\int_Dq(x)\bar\omega(x)dx.
\end{equation}
Finally, for $\mathcal{F}$ we claim that
\begin{equation}\label{Fvar}
    \liminf_{j\to +\infty} \mathcal{F}(\omega_j)\ge \mathcal{F}(\bar\omega).
\end{equation}
In fact, we can prove \eqref{Fvar} by contradiction. Suppose that \[\liminf_{j\to +\infty} \mathcal{F}(\omega_j)\leq \mathcal{F}(\bar\omega)+2\delta\] for some $\delta>0$. We may take a subsequence, still denoted by $\{\omega_j\}$, such that \[\mathcal{F}(\omega_j)\leq \mathcal{F}(\bar\omega)+\delta\,\,\mbox{ for each $j$}.\]
Since $\omega_j\to\bar{\omega}$ weakly star in $L^\infty(D)$ as $j\to+\infty$, we have $\omega_j\to\bar{\omega}$ weakly in $L^2(D)$ as $j\to\infty$. By Mazur's theorem, we can take a sequence $\{w_n\}$ that converges to $\bar{\omega}$ strongly in $L^2(D)$, where each $w_n$ is made up of convex combinations of the $\omega_j$'s, that is, \[w_n=\sum_{j=1}^{m_n}\theta_{n_j}\omega_j,\,\,\,\sum_{j=1}^{m_n}\theta_{n_j}=1,\,\,\theta_{n_j}\in[0,1].\]
Without loss of generality, we also assume that $w_n$ converges to $\bar{w}$ a.e. in $D$. Then by Lebesgue's dominated convergence theorem we obtain
\begin{equation}\label{aaa}
\lim_{n\to+\infty}\mathcal{F}(w_n)=\mathcal{F}(\bar{\omega}).
\end{equation}
On the other hand,
\begin{equation}\label{convex}
\mathcal{F}(w_n)=\mathcal{F}(\sum_{j=1}^{m_n}\theta_{n_j}\omega_j)\leq
\sum_{j=1}^{m_n}\theta_{n_j}\mathcal{F}(\omega_j)\leq \sum_{j=1}^{m_n}\theta_{n_j}(\mathcal{F}(\bar\omega)+\delta)=\mathcal{F}(\bar\omega)+\delta,
\end{equation}
which is contradiction to \eqref{aaa}. Note that we used the convexity of $\mathcal F$ in the first inequality of \eqref{convex}. Thus we have proved \eqref{Fvar}.
Combining \eqref{Evar}, \eqref{q} and \eqref{Fvar} we get
\begin{equation}\label{leq}
\lim_{j\to+\infty}\mathcal{E}(\omega_{j})\leq \mathcal{E}({\bar\omega}).
\end{equation}
But
\[\lim_{j\to+\infty}\mathcal{E}(\omega_{j}) =\sup_{\omega\in \mathcal{M}^\kappa}\mathcal{E}({\omega})\geq \mathcal{E}({\bar\omega}).\]
Therefore we obtain
\[\mathcal{E}({\bar\omega})=\sup_{\omega\in \mathcal{M}^\kappa}\mathcal{E}({\omega})\]
as desired.
\end{proof}

\begin{lemma}
Let $\omega^\kappa$ be a maximizer of $\mathcal{E}$ over $\mathcal{M}^\kappa$. Then there exists some real number $\mu^\kappa$ depending on $\kappa$ such that
\begin{equation}\label{form}\omega^\kappa=\Lambda(\kappa)I_{\{x\in D\mid \mathcal{G}\omega^\kappa(x)+q(x)-\mu^\kappa\geq f^{-1}(1)\}}+\Lambda(\kappa)f(\mathcal{G}\omega^\kappa+q-\mu^\kappa)I_{\{x\in D\mid 0<\mathcal{G}\omega^\kappa(x)+q(x)-\mu^\kappa<f^{-1}(1)\}}.
\end{equation}

\end{lemma}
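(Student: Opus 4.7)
The plan is to derive the first-order necessary condition of optimality at $\omega^\kappa$ and then extract the pointwise structure via a classical mass-exchange (bang-bang) argument.

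First I would compute the Gateaux derivative of $\mathcal{E}$. Since $F'=f^{-1}$, the concave term $\mathcal{F}(\omega)=\Lambda(\kappa)\int_D F(\Lambda^{-1}(\kappa)\omega)dx$ is Gateaux differentiable on $L^\infty(D)$ with $\mathcal{F}'(\omega)[\eta]=\int_D f^{-1}(\omega/\Lambda(\kappa))\eta\, dx$, while the quadratic and linear parts contribute $\int_D(\mathcal{G}\omega+q)\eta\, dx$. The set $\mathcal{M}^\kappa$ is convex, so for every $\tilde\omega\in\mathcal{M}^\kappa$ and $t\in[0,1]$ the convex combination $(1-t)\omega^\kappa+t\tilde\omega$ lies in $\mathcal{M}^\kappa$; maximality of $\omega^\kappa$ then yields the variational inequality
\begin{equation}\label{vi}
\int_D(\tilde\omega-\omega^\kappa)\,h^\kappa\, dx\le 0,\qquad h^\kappa:=\mathcal{G}\omega^\kappa+q-f^{-1}(\omega^\kappa/\Lambda(\kappa)),
\end{equation}
for every $\tilde\omega\in\mathcal{M}^\kappa$.

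The crux is to show that there exists a single real number $\mu^\kappa$ such that, almost everywhere in $D$,
\[\omega^\kappa=\Lambda(\kappa)\text{ on }\{h^\kappa>\mu^\kappa\},\quad h^\kappa=\mu^\kappa\text{ on }\{0<\omega^\kappa<\Lambda(\kappa)\},\quad \omega^\kappa=0\text{ on }\{h^\kappa<\mu^\kappa\}.\]
This is proved by a standard exchange argument applied to \eqref{vi}: if the conclusion failed, one could find $\alpha>\beta$ and measurable sets $A\subset\{h^\kappa>\alpha\}\cap\{\omega^\kappa<\Lambda(\kappa)\}$, $B\subset\{h^\kappa<\beta\}\cap\{\omega^\kappa>0\}$ of positive (and, after a further truncation, comparable) measure. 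Choosing $\varepsilon>0$ small and a constant $c$ so that $\tilde\omega:=\omega^\kappa+\varepsilon(I_A-cI_B)$ satisfies both pointwise bounds $0\le\tilde\omega\le\Lambda(\kappa)$ and mass balance $\int_D\tilde\omega\, dx=\kappa$, one obtains $\tilde\omega\in\mathcal{M}^\kappa$ with $\int_D(\tilde\omega-\omega^\kappa)h^\kappa\, dx>0$, contradicting \eqref{vi}. The existence of the common level $\mu^\kappa$ follows by letting $\alpha\downarrow\beta$.

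Finally I would translate these three alternatives into the stated formula. On $\{0<\omega^\kappa<\Lambda(\kappa)\}$, the equality $h^\kappa=\mu^\kappa$ rearranges, using strict monotonicity of $f^{-1}$ on $[0,1]$, to $\omega^\kappa=\Lambda(\kappa)f(\mathcal{G}\omega^\kappa+q-\mu^\kappa)$, and the open pointwise bounds on $\omega^\kappa$ become $0<\mathcal{G}\omega^\kappa+q-\mu^\kappa<f^{-1}(1)$. On $\{\omega^\kappa=\Lambda(\kappa)\}$ the inequality $h^\kappa\ge\mu^\kappa$ becomes $\mathcal{G}\omega^\kappa+q-\mu^\kappa\ge f^{-1}(1)$, contributing the patch term. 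On $\{\omega^\kappa=0\}$ we read $\mathcal{G}\omega^\kappa+q\le\mu^\kappa$, i.e.\ $\mathcal{G}\omega^\kappa+q-\mu^\kappa\le 0$, and by (H1) this region contributes nothing. Assembling the three pieces yields \eqref{form}. The main technical obstacle will be the measure-theoretic bookkeeping in the exchange step, namely arranging $A$ and $B$ so that $\tilde\omega$ stays admissible while producing a strict gain; however, this is by now a standard device and parallels the corresponding arguments in \cite{T} and \cite{CWZ}.
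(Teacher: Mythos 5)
Your proposal is correct and follows essentially the same route as the paper: a one-sided derivative of $\mathcal{E}$ along convex combinations in $\mathcal{M}^\kappa$ yields the variational inequality for $h^\kappa=\mathcal{G}\omega^\kappa+q-f^{-1}(\Lambda^{-1}(\kappa)\omega^\kappa)$, a single level $\mu^\kappa$ is extracted, and the three pointwise alternatives are assembled into \eqref{form}. The only cosmetic difference is that the paper invokes an adaptation of the bathtub principle (Lieb--Loss, Theorem 1.14) where you carry out the mass-exchange argument by hand; the conclusions are identical.
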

\begin{proof}
For any $\omega\in\mathcal M^\kappa,$ define a family of test functions as follows
\begin{equation*}
  \omega_{s}=\omega^{\kappa}+s({\omega}-\omega^{\kappa}),\ \ \ s\in[0,1].
\end{equation*}
Since $\mathcal M^\kappa$ is obviously a convex set, we have $\omega_{s}\in\mathcal M^\kappa$ for any $s\in[0,1]$.
Therefore, by the fact that $\omega^\kappa$ is a maximizer, we have
\begin{equation*}
     0  \ge \frac{d\mathcal{E}(\omega_{s})}{ds}\bigg|_{s=0^+}
       =\int_{D}({\omega}-\omega^{\kappa})\left(\mathcal{G}\omega^{\kappa}+q-f^{-1}(\Lambda^{-1}(\kappa)\omega^\kappa)\right)dx,
 \end{equation*}
that is,
\begin{equation*}
  \int_{D}\omega^{\kappa}\left(\mathcal{G}\omega^{\kappa}+q-f^{-1}(\Lambda^{-1}(\kappa)\omega^\kappa)\right)dx\ge \int_{D}{\omega}\left(\mathcal{G}\omega^{\kappa}+q-f^{-1}(\Lambda^{-1}(\kappa)\omega^\kappa)\right)dx.
\end{equation*}
Since ${\omega}\in \mathcal{M}^\kappa$ is arbitrary,
by an adaptation of the bathtub principle (see \cite{LL}, Theorem 1.14), there exists some real number $\mu^\kappa$ such that
\[\omega^\kappa=\Lambda(\kappa)I_{\{x\in D\mid\mathcal{G}\omega^{\kappa}(x)+q(x)-f^{-1}(\Lambda^{-1}(\kappa)\omega^\kappa(x))>\mu^\kappa\}}
+hI_{\{x\in D\mid \mathcal{G}\omega^{\kappa}(x)+q(x)-f^{-1}(\Lambda^{-1}(\kappa)\omega^\kappa(x))=\mu^\kappa\}},\]
where $h\in L^{1}_{loc}(D)$ satisfies $0\leq h(x)\leq \Lambda(\kappa)$ for a.e. $x\in D$. Since $\omega^\kappa\in\mathcal M^\kappa,$ it is easy to check that
\begin{equation}\label{3-6}
  \begin{split}
    \mathcal{G}\omega^{\kappa}+q-\mu^\kappa &\ge f^{-1}(\Lambda^{-1}(\kappa)\omega^\kappa) \ \ \  \mbox{a.e. on}\  \{x\in D\mid \omega^{\kappa}(x)={\Lambda(\kappa)}\}, \\
     \mathcal{G}\omega^{\kappa}+q-\mu^\kappa &=  f^{-1}(\Lambda^{-1}(\kappa)\omega^\kappa)\ \ \  \mbox{a.e. on}\  \{x\in D\mid0<\omega^{\kappa}(x)<{\Lambda(\kappa)}\}, \\
    \mathcal{G}\omega^{\kappa}+q-\mu^\kappa &\le  f^{-1}(\Lambda^{-1}(\kappa)\omega^\kappa) \ \ \  \mbox{a.e. on}\  \{x\in D\mid\omega^{\kappa}(x)=0\}.
  \end{split}
\end{equation}
 Now the desired form \eqref{form} follows immediately.

\end{proof}

\begin{remark}
Note that one can also consider the minimization of $\mathcal{E}$ over $\mathcal{M}^\kappa.$  In this case, a minimizer indeed exists, but it is a vortex patch, which is exactly the one obtained in \cite{CWZ}. To see this, recall
$\mathcal E=E_q-\mathcal{F}$, where $E_q$ is defined by \eqref{ql}.
By \cite{CWZ}, $E_q$ has a unique minimizer $w^\kappa$ over $\mathcal M^\kappa$ satisfying
$$w^\kappa=\Lambda(\kappa)I_{U^\kappa},\,\,U^\kappa=\{x\in D\mid \mathcal Gw^\kappa(x)+q(x)<\nu^\kappa\},\,\,\Lambda(\kappa)|U^\kappa|=\kappa,$$
where $\nu^\kappa\in\mathbb R$ depends on $\kappa.$ On the other hand, it is also easy to check that for any $\omega\in\mathcal M^\kappa$
\[\mathcal{F}(\omega)=\Lambda(\kappa)\int_DF(\Lambda^{-1}(\kappa)\omega)dx
=\Lambda(\kappa)\int_D\Lambda^{-1}(\kappa)\omega\frac{F(\Lambda^{-1}(\kappa)\omega)}{\Lambda^{-1}(\kappa)\omega}dx
\leq \int_D\omega F(1)dx=\mathcal{F}(w^\kappa).\]
Here we used the convexity of $F$. This means that $w^\kappa$ is a maximizer of $\mathcal F$ over $\mathcal M^\kappa$. So $w^\kappa$ is in fact the unique minimizer of $\mathcal E$ over $\mathcal M^\kappa.$
In other words, the term $\mathcal F$ has no effect in the minimization case.
\end{remark}

\subsection{Limiting behavior}\label{lmt}
In this subsection, we analyze the limiting behavior of $\omega^\kappa$ as $\kappa\to0^+$.
The key ingredient is to derive a suitable estimate for the Lagrange multiplier $\mu^\kappa$.
In the rest of this section, for ease of notations, we will use $o(1)$ to denote various quantities that go to zero as $\kappa\to0^+$, and $o(\kappa)$ to denote $o(1)\kappa.$

\begin{lemma}\label{o1}
$\sup_{\omega\in\mathcal{M}^\kappa}\|\mathcal{G}\omega\|_{W^{1,2}(D)}=o(1),\,\,\sup_{\omega\in\mathcal{M}^\kappa}\|\mathcal G\omega\|_{L^\infty(D)}=o(1).$

\end{lemma}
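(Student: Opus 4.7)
The plan is to reduce both estimates to an $L^p$ bound on $\omega$ alone, which can be controlled by interpolating between the $L^1$ and $L^\infty$ constraints encoded in $\mathcal{M}^\kappa$. Recall that every $\omega\in\mathcal{M}^\kappa$ satisfies $\|\omega\|_{L^1(D)}=\kappa$ and $\|\omega\|_{L^\infty(D)}\leq \Lambda(\kappa)$. For any $p\in(1,\infty)$ I would observe that
\begin{equation*}
\int_D|\omega|^p\,dx\leq \Lambda(\kappa)^{p-1}\int_D\omega\,dx=\Lambda(\kappa)^{p-1}\kappa,
\end{equation*}
so $\|\omega\|_{L^p(D)}\leq \Lambda(\kappa)^{(p-1)/p}\kappa^{1/p}$.

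Next I would use (A2) to pick a specific $p>1$ for which this interpolation bound tends to zero. Since $\lim_{s\to 0^+}\Lambda(s)s^{\gamma_0}=0$, for small $\kappa$ we have $\Lambda(\kappa)\leq \kappa^{-\gamma_0}$, hence $\Lambda(\kappa)^{p-1}\kappa\leq \kappa^{1-\gamma_0(p-1)}$. Choosing any $p\in(1,\,1+1/\gamma_0)$ (always possible since $\gamma_0>0$) guarantees the exponent $1-\gamma_0(p-1)>0$, so $\Lambda(\kappa)^{p-1}\kappa=o(1)$. Consequently
\begin{equation*}
\sup_{\omega\in\mathcal{M}^\kappa}\|\omega\|_{L^p(D)}\leq \bigl(\Lambda(\kappa)^{p-1}\kappa\bigr)^{1/p}=o(1).
\end{equation*}

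With this $L^p$ bound in hand, I would invoke standard elliptic regularity applied to $-\Delta(\mathcal{G}\omega)=\omega$ in $D$ with zero Dirichlet boundary data: $\|\mathcal{G}\omega\|_{W^{2,p}(D)}\leq C\|\omega\|_{L^p(D)}$ for the chosen $p>1$. Since $D\subset\mathbb{R}^2$, the Sobolev embedding $W^{2,p}(D)\hookrightarrow C(\overline{D})\cap W^{1,2}(D)$ holds for every $p>1$ (the first embedding follows from $2-2/p>0$, the second from $p\geq 1$ together with boundedness of $D$). Combining these two observations yields
\begin{equation*}
\|\mathcal{G}\omega\|_{L^\infty(D)}+\|\mathcal{G}\omega\|_{W^{1,2}(D)}\leq C\|\omega\|_{L^p(D)}=o(1),
\end{equation*}
uniformly in $\omega\in\mathcal{M}^\kappa$, which is exactly what the lemma asserts.

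The only subtle point in this plan is matching the exponent $p$ to the growth rate permitted by (A2); the choice $p\in(1,1+1/\gamma_0)$ is essentially forced and makes indispensable use of both the $L^\infty$ constraint $\Lambda(\kappa)$ and the integral constraint $\kappa$. Hypothesis (A1) is not needed here — it will play its role later, in estimating the Lagrange multiplier $\mu^\kappa$. No further technicalities are expected, so this lemma should be short and routine once the interpolation exponent is fixed.
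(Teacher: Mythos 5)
Your proposal is correct and follows essentially the same route as the paper: interpolate the $L^1$ and $L^\infty$ constraints to get $\|\omega\|_{L^p}\leq\Lambda(\kappa)^{1-1/p}\kappa^{1/p}=o(1)$ via (A2), then apply $W^{2,p}$ elliptic regularity and the Sobolev embeddings. The only (immaterial) difference is that the paper takes $p=1+1/\gamma_0$ exactly so that (A2) applies verbatim, whereas you take $p$ slightly smaller and use the consequence $\Lambda(\kappa)\leq\kappa^{-\gamma_0}$; both choices work.
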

\begin{proof}
By Sobolev's inequality, it suffices to show that
\[\sup_{\omega\in\mathcal{M}^\kappa}\|\mathcal{G}\omega\|_{W^{2,p}(D)}=o(1)\]
for some $p>1.$ By elliptic regularity theory this reduces to
\begin{equation}\label{reduce}
\sup_{\omega\in\mathcal{M}^\kappa}\|\omega\|_{L^p(D)}=o(1).
\end{equation}
For any $\omega\in\mathcal{M}^\kappa$ we calculate directly to obtain
\[\|\omega\|_{L^p(D)}=\left(\int_D|\omega|^pdx\right)^{1/{p}}\leq |\Lambda(\kappa)|^{1-1/p}\kappa^{1/p}
=\left(\Lambda(\kappa)\kappa^{1/(p-1)}\right)^{1-1/p}.\]
Recalling (A2) and by choosing $p=1+\gamma_0^{-1}$ we get \eqref{reduce}.

\end{proof}

The estimate of the upper bound of $\mu^\kappa$ is straightforward.
\begin{lemma}\label{efqu}
$\mu^\kappa\leq \max_{\overline{D}}q+o(1).$
\end{lemma}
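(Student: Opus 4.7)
The plan is to exploit the structural characterization \eqref{form} of the maximizer, together with the uniform smallness estimate for $\mathcal{G}\omega$ already proved in Lemma~\ref{o1}. The key observation is that, since $\int_D \omega^\kappa\,dx = \kappa > 0$, the set $\{x \in D : \omega^\kappa(x) > 0\}$ has positive Lebesgue measure, and on this set the Euler--Lagrange conditions \eqref{3-6} force $\mathcal{G}\omega^\kappa + q - \mu^\kappa \geq 0$ (with equality on the intermediate piece where $0 < \omega^\kappa < \Lambda(\kappa)$, and strict inequality with value $\geq f^{-1}(1)$ on the ``patch'' piece, because $f^{-1} \geq 0$ on $[0,\infty)$).

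Concretely, I would pick any point $x_0$ in $\{\omega^\kappa > 0\}$ at which the inequality $\mathcal{G}\omega^\kappa(x_0) + q(x_0) \geq \mu^\kappa$ holds (such a point exists, even in the pointwise sense, since $\mathcal{G}\omega^\kappa$ is continuous on $\overline D$ by elliptic regularity and the set has positive measure). Then
\begin{equation*}
\mu^\kappa \;\leq\; \mathcal{G}\omega^\kappa(x_0) + q(x_0) \;\leq\; \|\mathcal{G}\omega^\kappa\|_{L^\infty(D)} + \max_{\overline D} q.
\end{equation*}
Applying Lemma~\ref{o1} to conclude $\|\mathcal{G}\omega^\kappa\|_{L^\infty(D)} = o(1)$ as $\kappa \to 0^+$ yields exactly $\mu^\kappa \leq \max_{\overline D} q + o(1)$.

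There is no real obstacle here: the content of the lemma is just combining the sign information coming from the Lagrange-multiplier characterization with the $L^\infty$ smallness of $\mathcal{G}\omega$ on $\mathcal{M}^\kappa$. The harder work in this subsection will be the matching lower bound on $\mu^\kappa$ (where one must choose a good test function in $\mathcal{M}^\kappa$ concentrated near $\mathcal{S}$ and carefully use assumptions (A1), (A2), (H3) to control the concave term $\mathcal{F}$ against the linear term $\int q\omega$); the upper bound in Lemma~\ref{efqu} is essentially a one-line consequence of what is already in hand.
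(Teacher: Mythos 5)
Your argument is correct and is essentially the paper's own proof: both rest on the observation that $\{\mathcal G\omega^\kappa+q-\mu^\kappa>0\}$ is nonempty (which you justify, slightly more explicitly, via the Euler--Lagrange conditions on $\{\omega^\kappa>0\}$), giving $\mu^\kappa\leq\|\mathcal G\omega^\kappa\|_{L^\infty(D)}+\max_{\overline D}q$, and then on Lemma~\ref{o1}.
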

\begin{proof}
It is obvious that $\{x\in D\mid \mathcal{G}\omega^\kappa(x)+q(x)-\mu^\kappa>0\}$ is not empty, so we have
\[\mu^\kappa\leq\|\mathcal G\omega^\kappa\|_{L^\infty(D)}+\max_{\overline{D}}q.\]
Combining Lemma \ref{o1} we complete the proof.

\end{proof}

The estimate of the lower bound of $\mu^\kappa$ is a little involved. We begin with the following lemma.
\begin{lemma}\label{lowerb}
$\mathcal{E}(\omega^\kappa)\geq \kappa \max_{\overline{D}}q+o(\kappa).$
\end{lemma}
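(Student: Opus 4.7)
My plan is to construct an explicit test function $\tilde\omega_\kappa\in\mathcal M^\kappa$ concentrated near a global maximum point of $q$, evaluate $\mathcal E(\tilde\omega_\kappa)$, and then invoke the maximizer property $\mathcal E(\omega^\kappa)\geq\mathcal E(\tilde\omega_\kappa)$. Fix $\bar x\in\overline D$ with $q(\bar x)=\max_{\overline D}q$ and set
\[
a_\kappa:=\sqrt{\kappa\Lambda(\kappa)},\qquad \sigma_\kappa:=\kappa/a_\kappa=\sqrt{\kappa/\Lambda(\kappa)}.
\]
By (A1) both $\sigma_\kappa\to 0$ and $a_\kappa/\Lambda(\kappa)=\sigma_\kappa\to 0$, so $a_\kappa\leq\Lambda(\kappa)$ eventually. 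Since $\partial D$ is smooth, $|D\cap B_r(\bar x)|\geq c r^2$ for small $r>0$, hence I can choose a measurable $A_\kappa\subset D$ with $|A_\kappa|=\sigma_\kappa$ and $A_\kappa\subset B_{r_\kappa}(\bar x)$ for some $r_\kappa\to 0^+$. Then $\tilde\omega_\kappa:=a_\kappa I_{A_\kappa}$ satisfies $0\leq\tilde\omega_\kappa\leq\Lambda(\kappa)$ and $\int_D\tilde\omega_\kappa\,dx=a_\kappa|A_\kappa|=\kappa$, so $\tilde\omega_\kappa\in\mathcal M^\kappa$.

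Since $G\geq 0$ on $D\times D$, the interaction term in $\mathcal E(\tilde\omega_\kappa)$ is nonnegative and can be dropped from a lower bound:
\[
\mathcal E(\tilde\omega_\kappa)\;\geq\;\int_D q\,\tilde\omega_\kappa\,dx-\mathcal F(\tilde\omega_\kappa).
\]
Continuity of $q$ on $\overline D$ together with $A_\kappa\subset B_{r_\kappa}(\bar x)$, $r_\kappa\to 0$, yields $\inf_{A_\kappa}q=q(\bar x)+o(1)$, so $\int_D q\,\tilde\omega_\kappa\,dx\geq\kappa(\max_{\overline D}q+o(1))=\kappa\max_{\overline D}q+o(\kappa)$. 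For the concave penalty a direct computation using $a_\kappa|A_\kappa|=\kappa$ and $a_\kappa/\Lambda(\kappa)=\sigma_\kappa$ gives
\[
\mathcal F(\tilde\omega_\kappa)=\Lambda(\kappa)|A_\kappa|F(\sigma_\kappa)=\kappa\cdot\frac{F(\sigma_\kappa)}{\sigma_\kappa}.
\]
By (H1)(H2), $f^{-1}$ is continuous on $[0,\infty)$ with $f^{-1}(0)=0$, so the mean value $F(s)/s=s^{-1}\int_0^s f^{-1}(r)\,dr\to 0$ as $s\to 0^+$; combined with $\sigma_\kappa\to 0$ this gives $\mathcal F(\tilde\omega_\kappa)=o(\kappa)$. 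Summing the two estimates yields $\mathcal E(\omega^\kappa)\geq\mathcal E(\tilde\omega_\kappa)\geq\kappa\max_{\overline D}q+o(\kappa)$, as claimed.

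The main obstacle I expect is the calibration of the scale $a_\kappa$. The most natural guess, a characteristic patch $a_\kappa=\Lambda(\kappa)$ on a set of measure $\kappa/\Lambda(\kappa)$, produces $\mathcal F(\tilde\omega_\kappa)=\kappa F(1)$, which is $O(\kappa)$ but not $o(\kappa)$ and so falls short of the sharp constant $\max_{\overline D}q$. Hence $a_\kappa$ must be strictly smaller than $\Lambda(\kappa)$ so that the ratio $F(\sigma_\kappa)/\sigma_\kappa$ vanishes; yet $a_\kappa$ cannot be too small either, since one needs $|A_\kappa|=\kappa/a_\kappa\to 0$ in order that $A_\kappa$ be packable into a shrinking ball around $\bar x$. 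The geometric mean $a_\kappa=\sqrt{\kappa\Lambda(\kappa)}$ is a convenient choice inside the admissible window $\kappa\ll a_\kappa\ll\Lambda(\kappa)$ opened by (A1), and (A2) is not even needed for the present lemma (it will be needed only when the quadratic term must be controlled in later estimates).
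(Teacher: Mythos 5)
Your proof is correct and follows essentially the same route as the paper: the same geometric-mean test function of height $\sqrt{\kappa\Lambda(\kappa)}$ on a set of measure $\sqrt{\kappa/\Lambda(\kappa)}$ concentrated at a maximum point of $q$, with the same estimates $\int q\,\tilde\omega_\kappa\,dx=\kappa\max_{\overline D}q+o(\kappa)$ and $\mathcal F(\tilde\omega_\kappa)\le \kappa f^{-1}(\sigma_\kappa)=o(\kappa)$. The only (harmless) deviations are that you discard the quadratic term via $G\ge 0$ where the paper bounds it by $\kappa\|\mathcal G\upsilon^\kappa\|_{L^\infty}=o(\kappa)$ using Lemma \ref{o1}, and that you use a general measurable set in place of the interior-sphere disc; your closing remark that (A2) is then not needed for this lemma is accurate.
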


\begin{proof}
The basic idea is to choose a suitable test function. Let $x_0\in\partial D$ be a maximum point of $q$ on $\overline{D}$. Since $\partial D$ is smooth, $D$ satisfies the interior sphere condition at $x_0\in \partial D$. Therefore, for $\kappa$ sufficiently small, we can choose a disc $B_\varepsilon(x^\kappa)\subset D$ with $|x^\kappa-x_0|=\varepsilon$, where $\varepsilon$ satisfies $\pi\varepsilon^2=\sqrt{\kappa/\Lambda(\kappa)}$. Now we define a test function to be $\upsilon^\kappa:=\sqrt{\kappa \Lambda(\kappa)}I_{B_\varepsilon(x^\kappa)}.$ It is obvious that $\upsilon^\kappa\in \mathcal{M}^\kappa$ for small $\kappa$. Since $\omega^\kappa$ is a maximizer, we have
\begin{equation}
  \mathcal E(\omega^\kappa)\geq \mathcal E(\upsilon^\kappa).
  \end{equation}
On the other hand,
\begin{equation}\label{three}
\begin{split}
\mathcal E(\upsilon^\kappa)=\frac{1}{2}\int_{B_\varepsilon(x^\kappa)}G\upsilon^\kappa(x)dx+\int_{B_\varepsilon(x^\kappa)}q(x)dx
-\Lambda(\kappa)\int_{B_\varepsilon(x^\kappa)}F\left(\sqrt{{\kappa}/{\Lambda(\kappa)}}\right)dx.
  \end{split}
\end{equation}
For the first two terms in \eqref{three}, it is easy to check by a direct calculation that
\begin{equation}
\left|\int_{B_\varepsilon(x^\kappa)}G\upsilon^\kappa(x)dx\right|\leq \kappa \|G\upsilon^\kappa\|_{L^\infty(D)}=o(\kappa),
\end{equation}
\begin{equation}\label{efqq}
\int_{B_\varepsilon(x^\kappa)}q(x)dx\geq\kappa q(x_0)-\int_{B_\varepsilon(x^\kappa)}|q(x)-q(x_0)|dx\geq
\kappa q(x_0)-\kappa\varepsilon\|\nabla q\|_{L^\infty(D)}=\kappa q(x_0)+o(\kappa),
\end{equation}
where we used $q\in C^1(\overline{D})$, thus $\|\nabla q\|_{L^\infty(D)}<+\infty$, in \eqref{efqq}.
For the last term in \eqref{three}, since $F(s)\le s f^{-1}(s)$ and $\lim_{s\to0^+}\Lambda(s)/s=+\infty$ (recall assumption (A1)), we have
\begin{equation}
\left|\Lambda(\kappa)\int_{B_\varepsilon(x^\kappa)}F\left(\sqrt{{\kappa}/{\Lambda(\kappa)}}\right)dx\right|\le \kappa f^{-1}\left(\sqrt{{\kappa}/{\Lambda(\kappa)}}\right)=o(\kappa).
\end{equation}
Now the desired result clearly follows.

\end{proof}

\begin{lemma}\label{efF}
$\mathcal F({\omega^\kappa})= o(\kappa).$
\end{lemma}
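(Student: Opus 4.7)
The plan is to sandwich $\mathcal{F}(\omega^\kappa)$ between a lower bound of $0$ (from the non-negativity of $F$) and an upper bound of order $o(\kappa)$ obtained by comparing the already established lower bound on $\mathcal{E}(\omega^\kappa)$ (Lemma \ref{lowerb}) with an upper bound on the quadratic and linear parts of $\mathcal{E}(\omega^\kappa)$. No new test function is needed; everything follows from the bounds already in hand.

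First, I would observe that $F \geq 0$ on $\mathbb{R}$: by (H3)$'$ and since $f^{-1}\equiv 0$ on $(-\infty,0]$ and $f^{-1}\geq 0$ on $[0,+\infty)$, the integrand defining $F$ is non-negative, so $F\geq 0$ pointwise and hence $\mathcal{F}(\omega^\kappa)\geq 0$. This gives the trivial lower bound.

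Next I would estimate the first two terms in $\mathcal{E}(\omega^\kappa)$. For the quadratic term, using Lemma \ref{o1},
\begin{equation*}
\tfrac{1}{2}\int_D \omega^\kappa\,\mathcal{G}\omega^\kappa\,dx \leq \tfrac{1}{2}\|\mathcal{G}\omega^\kappa\|_{L^\infty(D)}\int_D\omega^\kappa\,dx = o(1)\cdot\kappa = o(\kappa).
\end{equation*}
For the linear term, since $\omega^\kappa\geq 0$ and $\int_D\omega^\kappa\,dx=\kappa$,
\begin{equation*}
\int_D q\,\omega^\kappa\,dx \leq \kappa \max_{\overline{D}} q.
\end{equation*}
Adding these and subtracting $\mathcal{F}(\omega^\kappa)$ yields
\begin{equation*}
\mathcal{E}(\omega^\kappa) \leq \kappa\max_{\overline{D}}q + o(\kappa) - \mathcal{F}(\omega^\kappa).
\end{equation*}

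Finally, combining with Lemma \ref{lowerb}, which asserts $\mathcal{E}(\omega^\kappa)\geq \kappa\max_{\overline{D}}q + o(\kappa)$, the two $\kappa\max_{\overline{D}}q$ terms cancel and one obtains $\mathcal{F}(\omega^\kappa)\leq o(\kappa)$. Together with $\mathcal{F}(\omega^\kappa)\geq 0$ this gives $\mathcal{F}(\omega^\kappa)=o(\kappa)$, as required. There is no real obstacle here; the only subtle point is to confirm the sign of $F$ (which is immediate from the definition of $F$ and $f^{-1}$), and to keep track that the $o(1)$ from Lemma \ref{o1} multiplies $\kappa$ to produce an $o(\kappa)$ quantity, which is exactly why the term $\mathcal{F}$ had to be inserted into the functional in the first place.
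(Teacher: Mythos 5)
Your proof is correct and follows essentially the same route as the paper: both arguments combine the lower bound on $\mathcal{E}(\omega^\kappa)$ from Lemma \ref{lowerb} with the estimates $\frac{1}{2}\int_D\omega^\kappa\mathcal{G}\omega^\kappa\,dx=o(\kappa)$ (via Lemma \ref{o1}) and $\int_Dq\,\omega^\kappa\,dx\leq\kappa\max_{\overline{D}}q$, together with $\mathcal{F}\geq0$. The only cosmetic difference is that the paper rearranges the inequality to bound $\mathcal{F}(\omega^\kappa)$ directly, while you isolate $\mathcal{E}(\omega^\kappa)$ first; the content is identical.
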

 \begin{proof}
  In view of Lemma \ref{o1} and Lemma \ref{lowerb}, we calculate directly as follows
    \begin{align}\label{Ff}
    \mathcal F({\omega^\kappa})&\leq \frac{1}{2}\int_D\omega^\kappa\mathcal G\omega^\kappa dx+\int_Dq\omega^\kappa dx
    -\kappa \max_{\overline{D}}q+o(\kappa)\\
    &=\frac{1}{2}\int_D\omega^\kappa\mathcal G\omega^\kappa dx+\int_D(q-\max_{\overline{D}}q)\omega^\kappa dx
    +o(\kappa)\\
    &\leq  \frac{1}{2}\kappa\|\mathcal G\omega^\kappa\|_{L^\infty(D)}+o(\kappa)\\
    &=o(\kappa).
    \end{align}
 Thus the proof is completed.
\end{proof}

Now we are in a position to derive the desired estimate for $\mu^\kappa$.
\begin{lemma}\label{mu}
$\mu^\kappa=\max_{\overline{D}}q+o(1).$
\end{lemma}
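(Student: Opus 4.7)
The upper bound $\mu^\kappa \le \max_{\overline{D}}q + o(1)$ is already supplied by Lemma \ref{efqu}, so the task reduces to proving the matching lower bound $\mu^\kappa \ge \max_{\overline{D}}q - o(1)$. My plan is to extract a sharp upper estimate $\mathcal{E}(\omega^\kappa) \le \kappa\mu^\kappa + o(\kappa)$ and then play it against Lemma \ref{lowerb}. Set $M := \max_{\overline{D}}q$, $A := \{\omega^\kappa = \Lambda(\kappa)\}$, and $B := \{0 < \omega^\kappa < \Lambda(\kappa)\}$.

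I would begin by algebraically rewriting
\begin{equation*}
\mathcal{E}(\omega^\kappa) = -\tfrac{1}{2}\int_D \omega^\kappa \mathcal{G}\omega^\kappa\,dx + \int_D \omega^\kappa(\mathcal{G}\omega^\kappa + q - \mu^\kappa)\,dx + \kappa\mu^\kappa - \mathcal{F}(\omega^\kappa).
\end{equation*}
By Lemma \ref{o1} the first term is $o(\kappa)$. For the middle pair I use the Euler--Lagrange relations \eqref{3-6}: on $B$ one has $\mathcal{G}\omega^\kappa + q - \mu^\kappa = f^{-1}(\Lambda^{-1}(\kappa)\omega^\kappa)$, while on $A$ one has $f^{-1}(\Lambda^{-1}(\kappa)\omega^\kappa) = f^{-1}(1)$ and $\mathcal{G}\omega^\kappa + q - \mu^\kappa \ge f^{-1}(1)$. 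Splitting the integral accordingly gives
\begin{equation*}
\int_D \omega^\kappa(\mathcal{G}\omega^\kappa + q - \mu^\kappa)\,dx = \int_D \omega^\kappa f^{-1}(\Lambda^{-1}(\kappa)\omega^\kappa)\,dx + \int_A \omega^\kappa\bigl[(\mathcal{G}\omega^\kappa + q - \mu^\kappa) - f^{-1}(1)\bigr]\,dx.
\end{equation*}
The key analytic input is the two-sided inequality $F(s) \le s f^{-1}(s) \le \delta_1^{-1} F(s)$ for $s \ge 0$, where the first bound follows from monotonicity of $f^{-1}$ and the second from (H3)$'$. Applied at $s = \Lambda^{-1}(\kappa)\omega^\kappa$ and integrated, this yields $0 \le \int_D \omega^\kappa f^{-1}(\Lambda^{-1}(\kappa)\omega^\kappa)\,dx - \mathcal{F}(\omega^\kappa) \le (\delta_1^{-1}-1)\mathcal{F}(\omega^\kappa) = o(\kappa)$ by Lemma \ref{efF}.

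Substituting back, I obtain $\mathcal{E}(\omega^\kappa) = \kappa\mu^\kappa + o(\kappa) + I_A$ where $I_A := \int_A \omega^\kappa[(\mathcal{G}\omega^\kappa + q - \mu^\kappa) - f^{-1}(1)]\,dx \ge 0$. Using $\omega^\kappa = \Lambda(\kappa)$ on $A$, the mass constraint $\Lambda(\kappa)|A| \le \kappa$, and Lemma \ref{o1}, the integrand is bounded pointwise by $o(1) + M - \mu^\kappa - f^{-1}(1)$, so
\begin{equation*}
0 \le I_A \le \kappa\bigl(o(1) + M - \mu^\kappa - f^{-1}(1)\bigr)_+.
\end{equation*}

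Finally I combine with Lemma \ref{lowerb}: $\kappa M + o(\kappa) \le \mathcal{E}(\omega^\kappa) \le \kappa\mu^\kappa + o(\kappa) + \kappa(o(1) + M - \mu^\kappa - f^{-1}(1))_+$. If $M - \mu^\kappa > f^{-1}(1) + o(1)$, the positive-part factor equals its argument and the chain collapses to $f^{-1}(1) \le o(1)$, contradicting the fact that $f^{-1}(1)$ is a fixed positive constant (by (H1)--(H2)). Therefore for $\kappa$ small the $(\cdot)_+$ term vanishes, and one reads off $\mu^\kappa \ge M - o(1)$, which together with Lemma \ref{efqu} finishes the proof. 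I expect the main obstacle to be the patch set $A$, where \eqref{3-6} only provides an inequality rather than an equation; the fix is exactly the dichotomy above, exploiting that a non-negligible patch would force $\mu^\kappa$ well below $M$, which Lemma \ref{lowerb} forbids.
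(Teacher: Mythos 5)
Your proposal is correct, and its skeleton coincides with the paper's: the same rewriting of $\mathcal{E}(\omega^\kappa)$ isolating $\kappa\mu^\kappa$, the same use of Lemmas \ref{o1}, \ref{efF}, \ref{lowerb}, the same splitting via the Euler--Lagrange relations \eqref{3-6}, and the same treatment of the $f^{-1}$ term through (H3)$'$. The one step where you genuinely diverge is the patch-set contribution $I_A=\int_{A^\kappa}\omega^\kappa\bigl(\mathcal{G}\omega^\kappa+q-\mu^\kappa-f^{-1}(1)\bigr)dx$. The paper proves directly that this term is $o(\kappa)$ by observing $\zeta^\kappa_+$ is supported in $A^\kappa$, then running H\"older's inequality and the Sobolev embedding $W^{1,1}\hookrightarrow L^2$ against the smallness of $|A^\kappa|$ (which follows from (A1) and the mass constraint $\Lambda(\kappa)|A^\kappa|\le\kappa$). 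You instead bound the integrand pointwise by $\|\mathcal{G}\omega^\kappa\|_{L^\infty}+\max_{\overline{D}}q-\mu^\kappa-f^{-1}(1)$ and close the argument with a dichotomy: if this constant were positive, feeding the resulting bound $I_A\le\kappa(o(1)+M-\mu^\kappa-f^{-1}(1))$ back into the energy identity and Lemma \ref{lowerb} would force $f^{-1}(1)\le o(1)$, contradicting that $f^{-1}(1)$ is a fixed positive number (which indeed it is, by (H1)--(H2) and $\lim_{s\to+\infty}f(s)=+\infty$); hence the positive part vanishes for small $\kappa$, $I_A=0$, and the lower bound for $\mu^\kappa$ drops out. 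Your route is more elementary --- it needs no Sobolev embedding and no gradient estimates on $\zeta^\kappa_+$, only the $L^\infty$ bound of Lemma \ref{o1} --- and as a by-product it already shows $I_A=0$, anticipating the subsequent lemma that the set $\{\mathcal{G}\omega^\kappa+q-\mu^\kappa\ge f^{-1}(1)\}$ is empty. What the paper's quantitative estimate buys is an unconditional proof that the patch term is $o(\kappa)$, independent of where $\mu^\kappa$ sits; but for the purpose of Lemma \ref{mu} your self-consistency argument is fully rigorous, since the $o(1)$ quantities involved are explicit functions of $\kappa$ and the contradiction in the bad case holds for all sufficiently small $\kappa$.
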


\begin{proof}
Since we have proved $\mu^\kappa\leq \max_{\overline{D}} q+o(1)$ in Lemma \ref{efqu}, it suffices to show that
\[\mu^\kappa\geq \max_{\overline{D}} q+o(1).\]
We write
\begin{equation}\label{write}
\begin{split}
\mathcal{E}(\omega^\kappa)
=&\frac{1}{2}\int_D\omega^\kappa\mathcal{G}\omega^\kappa dx+\int_Dq\omega^\kappa dx
-\mathcal F({\omega^\kappa})\\
=&-\frac{1}{2}\int_D\omega^\kappa\mathcal{G}\omega^\kappa dx+\int_D\omega^\kappa(\mathcal{G}\omega^\kappa+q-\mu^\kappa)dx
-\mathcal F({\omega^\kappa})+\kappa\mu^\kappa\\
=&\kappa\mu^\kappa+\int_D\omega^\kappa(\mathcal{G}\omega^\kappa+q-\mu^\kappa)dx+o(\kappa),
\end{split}
\end{equation}
where in the third equality we used Lemma \ref{efF}. Taking into account \eqref{write} and Lemma \ref{lowerb}, it remains to show
\[\left|\int_D\omega^\kappa(\mathcal{G}\omega^\kappa+q-\mu^\kappa)dx\right|=o(\kappa).\]
To this end, we calculate as follows
\begin{align*}
&\int_D\omega^\kappa(\mathcal{G}\omega^\kappa+q-\mu^\kappa)dx\\
=&\int_{\{x\in D\mid 0<\omega^\kappa(x)<\Lambda(\kappa)\}}\omega^\kappa(\mathcal{G}\omega^\kappa+q-\mu^\kappa)dx
+\int_{\{x\in D\mid\omega^\kappa(x)=\Lambda(\kappa)\}}\omega^\kappa(\mathcal{G}\omega^\kappa+q-\mu^\kappa)dx\\
=&\int_{\{x\in D\mid 0<\omega^\kappa(x)<\Lambda(\kappa)\}}\omega^\kappa f^{-1}(\Lambda^{-1}(\kappa)\omega^\kappa)dx
+\int_{\{x\in D\mid\omega^\kappa(x)=\Lambda(\kappa)\}}\omega^\kappa(\mathcal{G}\omega^\kappa+q-\mu^\kappa)dx\\
=&\int_D\omega^\kappa f^{-1}(\Lambda^{-1}(\kappa)\omega^\kappa)dx
+\int_{\{x\in D\mid\omega^\kappa(x)=\Lambda(\kappa)\}}\omega^\kappa(\mathcal{G}\omega^\kappa+q-\mu^\kappa-f^{-1}(1))dx\\
=&\int_D\omega^\kappa f^{-1}(\Lambda^{-1}(\kappa)\omega^\kappa)dx
+\int_D\omega^\kappa(\mathcal{G}\omega^\kappa+q-\mu^\kappa-f^{-1}(1))_+dx.
\end{align*}
For the first term, recalling (H3)$'$, that is,  $\delta_1sf^{-1}(s)\leq F(s)\leq sf^{-1}(s),\,\forall\,s\geq0$ for some $\delta_1\in(0,1)$, we obtain
\begin{align*}
\int_D\omega^\kappa f^{-1}(\Lambda^{-1}(\kappa)\omega^\kappa)dx&=\Lambda(\kappa)\int_D\Lambda^{-1}(\kappa)\omega^\kappa f^{-1}(\Lambda^{-1}(\kappa)\omega^\kappa)dx\\
&\leq \frac{\Lambda(\kappa)}{\delta_1}\int_DF(\Lambda^{-1}(\kappa)\omega^\kappa)dx\\
&=o(\kappa).
\end{align*}
Therefore, to finish the proof, it is enough to verify
\begin{align}\label{want}
\int_D\omega^\kappa(\mathcal{G}\omega^\kappa+q-\mu^\kappa-f^{-1}(1))_+dx=o(\kappa).
\end{align}
Denote $\zeta^\kappa=\mathcal{G}\omega^\kappa+q-\mu^\kappa-f^{-1}(1)$ and $A^\kappa=\{x\in D\mid \omega^\kappa(x)=\Lambda(\kappa)\}.$
By H\"older's inequality and the Sobolev imbedding theorem we have
\begin{equation}\label{ccc}
\begin{split}
&\int_D\omega^\kappa(\mathcal{G}\omega^\kappa+q-\mu^\kappa-f^{-1}(1))_+dx\\
=&\Lambda(\kappa)\int_{A^\kappa}\zeta^\kappa_+dx\\
\leq&\Lambda(\kappa)|A^\kappa|^{1/2}\left(\int_D|\zeta^\kappa_+ |^2dx\right)^{1/2}\\
\leq&C\Lambda(\kappa)|A^\kappa|^{1/2}\int_D\left(|\zeta^\kappa_+ |+|\nabla\zeta^\kappa_+|\right)dx\\
=&C\Lambda(\kappa)|A^\kappa|^{1/2}\int_D|\zeta^\kappa_+ |dx
+C\Lambda(\kappa)|A^\kappa|^{1/2}\int_D|\nabla\zeta^\kappa_+|dx.
\end{split}
\end{equation}
Here and hereafter we use $C$ to denote various quantities not depending on $\kappa$. Since $|A^\kappa|\to0$ as $\kappa\to0$ (by assumption (A1)), we obtain from \eqref{ccc}
\begin{align*}
\int_D\omega^\kappa(\mathcal{G}\omega^\kappa+q-\mu^\kappa-f^{-1}(1))_+dx=\Lambda(\kappa)\int_D|\zeta^\kappa_+ |dx
\leq C\Lambda(\kappa)|A^\kappa|^{1/2}\int_D|\nabla\zeta^\kappa_+|dx.
\end{align*}
But
\begin{align*}
\Lambda(\kappa)|A^\kappa|^{1/2}\int_D|\nabla\zeta^\kappa_+|dx
&\leq \Lambda(\kappa)|A^\kappa|\left(\int_D|\nabla\zeta^\kappa_+|^2dx\right)^{1/2}\\
&\leq C\kappa\left(\int_{A^\kappa}|\nabla q|^2+|\nabla G\omega^\kappa|^2dx\right)^{1/2}\\
&\leq o(\kappa),
\end{align*}
from which \eqref{want} obviously follows.

\end{proof}

\begin{lemma}
$\{x\in D\mid\mathcal G\omega^\kappa(x)+q(x)-\mu^\kappa\geq f^{-1}(1)\}=\varnothing$ if $\kappa$ is sufficiently small. As a consequence, $\omega^\kappa$ has the following form
\[\omega^\kappa=\Lambda(\kappa) f(G\omega^\kappa+q-\mu^\kappa)\]
if $\kappa$ is sufficiently small.
\end{lemma}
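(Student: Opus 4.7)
The plan is to combine the sharp Lagrange multiplier estimate $\mu^\kappa = \max_{\overline{D}} q + o(1)$ with the uniform smallness of $\mathcal{G}\omega^\kappa$ on $\overline{D}$, and compare them against the fixed positive constant $f^{-1}(1)$.

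First I would bound $\mathcal{G}\omega^\kappa(x) + q(x) - \mu^\kappa$ from above pointwise for every $x \in \overline{D}$. By Lemma \ref{o1} we have $\|\mathcal{G}\omega^\kappa\|_{L^\infty(D)} = o(1)$, and by Lemma \ref{mu} we have $\mu^\kappa \geq \max_{\overline{D}} q - o(1)$. Combining these two facts gives
\[
\mathcal{G}\omega^\kappa(x) + q(x) - \mu^\kappa \leq o(1) + q(x) - \max_{\overline{D}} q + o(1) \leq o(1)
\]
uniformly in $x \in D$. The second step is then to observe that, because $f$ is continuous and strictly increasing on $[0,+\infty)$ with $f(0)=0$ (assumptions (H1)(H2)), the value $f^{-1}(1)$ is a \emph{fixed} strictly positive constant, independent of $\kappa$. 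Therefore, once $\kappa$ is small enough that the $o(1)$ quantity above is strictly less than $f^{-1}(1)$, the super-level set in question is empty.

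The final step is to harvest the claimed form of $\omega^\kappa$ from the characterization \eqref{form}. Once the first indicator in \eqref{form} is identically zero, only the term $\Lambda(\kappa) f(\mathcal{G}\omega^\kappa + q - \mu^\kappa) I_{\{0 < \mathcal{G}\omega^\kappa + q - \mu^\kappa < f^{-1}(1)\}}$ survives; on the complement $\{\mathcal{G}\omega^\kappa + q - \mu^\kappa \leq 0\}$ the profile $f$ vanishes by (H1), so dropping the indicator does not change anything. This yields $\omega^\kappa = \Lambda(\kappa) f(\mathcal{G}\omega^\kappa + q - \mu^\kappa)$ as required.

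There is no real obstacle here: the entire argument is a straightforward bookkeeping exercise, with all the substantive work already done in Lemma \ref{o1} and Lemma \ref{mu}. The only point that deserves a brief sentence is why $f^{-1}(1)$ is bounded away from zero; this is immediate from the strict monotonicity of $f$ together with $f(0)=0$.
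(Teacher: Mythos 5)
Your argument is correct and is essentially the same as the paper's: the paper likewise combines the estimate $\mathcal G\omega^\kappa+q-\mu^\kappa\leq o(1)$ (from Lemma \ref{mu} together with Lemma \ref{o1}) with the fact that $f^{-1}(1)$ is a fixed positive constant to conclude that the super-level set is empty for small $\kappa$. Your extra sentence explaining why the remaining indicator in \eqref{form} can be dropped (since $f$ vanishes on $(-\infty,0]$ by (H1)) is a detail the paper leaves implicit, and it is handled correctly.
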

\begin{proof}
According to Lemma \ref{mu}, we have $\mathcal G\omega^\kappa+q-\mu^\kappa\leq o(1)$ as $\kappa\to0^+$. But $f^{-1}(1)$ is a positive number not depending on $\kappa$, so $\{x\in D\mid\mathcal G\omega^\kappa(x)+q(x)-\mu^\kappa\geq f^{-1}(1)\}$ must be empty if $\kappa$ is sufficiently small.

\end{proof}

\begin{lemma}\label{shrinkk}
$supp(\omega^\kappa)$ shrinks to $\mathcal S$ as $\kappa\to0$, that is, for any $\delta>0$, there exists $\kappa_1>0,$ such that for any $\kappa<\kappa_1,$ there holds $supp(\omega^\kappa)\subset \mathcal S_\delta.$
\end{lemma}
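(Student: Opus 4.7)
The plan is to combine the pointwise characterization of $\omega^\kappa$ obtained in the preceding lemma with the asymptotic estimates on $\mu^\kappa$ and $\mathcal{G}\omega^\kappa$ already established.

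First, for $\kappa$ sufficiently small we have $\omega^\kappa=\Lambda(\kappa) f(\mathcal G\omega^\kappa+q-\mu^\kappa)$. Since assumption (H1) gives $f(s)=0$ whenever $s\leq 0$ and (H2) gives $f(s)>0$ for $s>0$, the essential support of $\omega^\kappa$ coincides (up to a null set) with the set where $\mathcal G\omega^\kappa(x)+q(x)-\mu^\kappa>0$. Hence every point $x\in\mathrm{supp}(\omega^\kappa)$ satisfies
\begin{equation*}
q(x) \;>\; \mu^\kappa - \mathcal G\omega^\kappa(x) \;\geq\; \mu^\kappa - \|\mathcal G\omega^\kappa\|_{L^\infty(D)}.
\end{equation*}

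Next, I would invoke Lemma \ref{mu}, which gives $\mu^\kappa = \max_{\overline{D}}q + o(1)$, together with Lemma \ref{o1}, which gives $\|\mathcal G\omega^\kappa\|_{L^\infty(D)}=o(1)$. Combining these yields, for any $x\in\mathrm{supp}(\omega^\kappa)$,
\begin{equation*}
q(x) \;\geq\; \max_{\overline{D}}q - \eta(\kappa),
\end{equation*}
where $\eta(\kappa)\to 0^+$ as $\kappa\to 0^+$.

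The remaining step is a purely topological argument based on continuity of $q$ and compactness of $\overline{D}$: for each $\delta>0$, the set $\overline{D}\setminus \mathcal S_\delta$ is compact, so $q$ attains its maximum on it at some point which, by definition, lies strictly below $\max_{\overline{D}}q$. Thus there exists $\eta_\delta>0$ with $\{x\in\overline{D}\mid q(x)>\max_{\overline{D}}q-\eta_\delta\}\subset \mathcal S_\delta$. Choosing $\kappa_1>0$ so that $\eta(\kappa)<\eta_\delta$ for all $\kappa<\kappa_1$ forces $\mathrm{supp}(\omega^\kappa)\subset \mathcal S_\delta$, as required. Since also $\mathrm{supp}(\omega^\kappa)\subset D$, this proves $\mathcal S\subset\partial D$ is consistent and no serious obstacle remains; the argument is a direct extraction from the already-proven estimates, with the only mildly delicate point being the compactness step translating the pointwise lower bound on $q$ into a neighborhood inclusion.
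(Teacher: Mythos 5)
Your proof is correct and follows essentially the same route as the paper's, which runs the identical argument (Lemma \ref{mu} for $\mu^\kappa$, Lemma \ref{o1} for $\|\mathcal G\omega^\kappa\|_{L^\infty}$, plus continuity and compactness of $q$ on $\overline D$) in contrapositive form via a contradiction with a sequence $\kappa_j\to0$. One small slip to repair: since $\mathcal S_\delta$ is defined as a subset of the open set $D$ while $\mathcal S\subset\partial D$, the set $\overline D\setminus\mathcal S_\delta$ actually \emph{contains} $\mathcal S$, so $q$ attains its full maximum $\max_{\overline D}q$ there; you should instead take the compact set $\{x\in\overline D\mid \mathrm{dist}(x,\mathcal S)\ge\delta\}$, which is disjoint from $\mathcal S$ and on which $q<\max_{\overline D}q$ strictly, after which the rest of your argument goes through unchanged.
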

\begin{proof}
Denote $B^\kappa=\{x\in D\mid \mathcal G\omega^\kappa(x)+q(x)-\mu^\kappa>0\}$. It suffices to show that $B^\kappa$ shrinks to $\mathcal S$ as $\kappa\to0^+$. We prove this by contradiction. Suppose that there exist $\delta_0>0,$ $\{\kappa_j\}\subset\mathbb R_+$, $\{x_j\}\subset B^{\kappa_j}$, $j=1,\cdot\cdot\cdot$, such that $\kappa_j\to 0$ as $j\to+\infty$ but $x_j\notin \mathcal S_{\delta_0}$. Then by continuity of $q$ we have $\sup_jq(x_j)<\max_{\overline{D}}q$, which contradicts Lemma \ref{mu}.

\end{proof}
\subsection{Proof of Theorem \ref{single}}
Now we give the proof of Theorem \ref{single} for completeness.
\begin{proof}[Proof of Theorem \ref{single}]
We need only to show that $\omega^\kappa$ is a steady solution to the vorticity equation, since other assertions of Theorem \ref{single} have been verified in the last subsection.

For any $\phi\in C_c^\infty(D)$, we define a family of transformations $\{\Phi_s\}_{s\in\mathbb R}$ from $D$ to $D$ by solving the following ODE
\begin{equation}\label{ode}
\begin{cases}
  \frac{d\Phi_s(x)}{ds}=-\nabla^\perp\phi(\Phi_s(x)), & s\in\mathbb R \\
  \Phi_0(x)=x.
\end{cases}
\end{equation}
Since $\nabla^\perp\phi$ is a smooth vector field with compact support, $\eqref{ode}$ has a global solution for any $x\in D$. It is also easy to check that $\Phi_s$ is area-preserving, that is, for any measurable set $A\subset D$, we have $|\{\Phi_s(x)\mid x\in A\}|=|A|$ for any $s\in\mathbb R$. Set $\omega_s(x):=\omega^\kappa(\Phi_s(x)).$
Then $\omega_s\in \mathcal{M}^\kappa$ for any $s\in\mathbb R$. This implies
\begin{equation}\label{var0}
\frac{d\mathcal E(\omega_s)}{ds}\bigg|_{s=0}=0.
\end{equation}
On the other hand, we can expand $\mathcal E(\omega_s)$ at $s=0$ to obtain
\[\begin{split}
\mathcal E(\omega_s)
=\mathcal E(\omega^\kappa)+s\int_D\omega^\kappa\nabla^\perp(\mathcal G\omega^\kappa+q)\cdot\nabla\phi dx+o(s),
\end{split}\]
which together with \eqref{var0} gives
\[\int_D\omega^\kappa\nabla^\perp(\mathcal G\omega^\kappa+q)\cdot\nabla\phi dx=0.\]
Thus the proof is completed.

\end{proof}
\begin{remark}
From the proof, it is easy to see that we actually only use the value of $f$ in a small neighborhood of 0, therefore it is sufficient that $f$ only satisfies (H1)--(H3) in $[0,\tau_0]$ for some $\tau_0>0$ rather than in $[0,+\infty)$.
\end{remark}

\section{Proof of Theorem \ref{multiple}}
In this section we give the proof of Theorem \ref{multiple}, which is analogous to that of Theorem \ref{single}.

\subsection{Minimization Problem}

For $\vec\kappa\in\mathbb K^\alpha$ with $|\vec{\kappa}|$ sufficiently small, define
\begin{equation}
\mathcal{N}^{\vec\kappa}:=\{\omega\in L^\infty(D)\mid \omega=\sum_{i=1}^k\omega_i,\,supp(\omega_i)\subset B_{r_0}(\bar{x}_i),\,0\leq\omega_i\leq\Lambda_i(\kappa_i), \int_D\omega_i(x)dx=\kappa_i\}.
\end{equation}
Note that $\mathcal{N}^\kappa$ is not empty if $|\vec\kappa|$ is sufficiently small. We consider the maximization of the following functional over $\mathcal{N}^\kappa$
\begin{equation}
\mathcal P(\omega):=\frac{1}{2}\int_D\omega\mathcal G\omega dx+\int_Dq\omega dx
  -\sum_{i=1}^k\int_D\Lambda_i(\kappa_i)F_i(\Lambda_i^{-1}(\kappa_i)\omega I_{B_{r_0}}(\bar{x}_i))dx,\,\,\omega\in \mathcal{N}^\kappa.
\end{equation}

\begin{lemma}
$\mathcal{P}$ is bounded from above attains its maximum value over $\mathcal{N}^{\vec\kappa}$.
\end{lemma}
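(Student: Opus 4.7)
The plan is to mirror the proof of Lemma \ref{attain}, with only cosmetic changes to accommodate the support constraints and the sum-of-$k$ structure of $\mathcal{N}^{\vec\kappa}$.

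First I would establish the upper bound on $\mathcal{P}$. For any $\omega\in\mathcal{N}^{\vec\kappa}$, the pairwise disjointness of the balls $B_{r_0}(\bar x_i)$ forces the decomposition $\omega=\sum_i\omega_i$ to be the unique one given by $\omega_i=\omega I_{B_{r_0}(\bar x_i)}$, and in particular $\|\omega\|_{L^\infty(D)}\le \max_i\Lambda_i(\kappa_i)$. Since each $f_i^{-1}\ge 0$, the primitive $F_i$ is nonnegative, so the last sum in $\mathcal{P}$ is nonpositive and can be discarded when bounding from above. The remaining two terms are controlled exactly as in Lemma \ref{attain}, using $G\in L^1(D\times D)$ and $q\in L^\infty(D)$.

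Next I would take a maximizing sequence $\{\omega^j\}\subset\mathcal{N}^{\vec\kappa}$ and extract a subsequence converging weak-star in $L^\infty(D)$ to some $\bar\omega$. To place $\bar\omega$ back in $\mathcal{N}^{\vec\kappa}$ I need each of the defining conditions to be closed under weak-star convergence: the pointwise bounds $0\le\omega_i\le\Lambda_i(\kappa_i)$, the support constraint $\mathrm{supp}(\omega_i)\subset\overline{B_{r_0}(\bar x_i)}$ (tested against smooth functions supported in the complement), and the mass constraint $\int_D\omega_i\,dx=\kappa_i$ (tested against $I_{B_{r_0}(\bar x_i)}\in L^1(D)$) all pass to the limit. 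Since the balls are disjoint, this is clean.

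Finally I would verify that $\mathcal{P}$ is upper semicontinuous along this sequence. The quadratic term is in fact continuous: by elliptic regularity $\mathcal{G}\omega^j\to\mathcal{G}\bar\omega$ in $C^1(\overline D)$, exactly as in Lemma \ref{attain}, so $\int_D\omega^j\mathcal{G}\omega^j\,dx\to\int_D\bar\omega\mathcal{G}\bar\omega\,dx$. The linear term $\int_D q\,\omega\,dx$ is continuous by the very definition of weak-star convergence. The main obstacle, just as in Lemma \ref{attain}, is the convex term, for which I need
\[
\liminf_{j\to\infty}\sum_{i=1}^{k}\int_D\Lambda_i(\kappa_i)F_i\bigl(\Lambda_i^{-1}(\kappa_i)\omega^j I_{B_{r_0}(\bar x_i)}\bigr)dx\ge \sum_{i=1}^{k}\int_D\Lambda_i(\kappa_i)F_i\bigl(\Lambda_i^{-1}(\kappa_i)\bar\omega I_{B_{r_0}(\bar x_i)}\bigr)dx.
\]
Each $F_i$ is convex (since $f_i^{-1}$ is nondecreasing by (H2)), and the cutoff by the fixed indicator $I_{B_{r_0}(\bar x_i)}$ preserves convexity in $\omega$. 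The inequality then follows by the same Mazur-theorem argument used in Lemma \ref{attain}: pass from $\{\omega^j\}$ to a sequence of convex combinations $\{w_n\}$ that converges to $\bar\omega$ strongly in $L^2(D)$ (hence in $L^1$ along an a.e.-convergent subsequence), apply Lebesgue dominated convergence to the integrand (bounded by $\max_i \Lambda_i(\kappa_i)F_i(1)$), and use convexity to bound each $\mathcal{F}_i(w_n)$ by the corresponding convex combination of $\mathcal{F}_i(\omega^j)$. Combining the three ingredients yields $\mathcal{P}(\bar\omega)\ge \limsup_j\mathcal{P}(\omega^j)=\sup_{\mathcal{N}^{\vec\kappa}}\mathcal{P}$, so $\bar\omega$ is the desired maximizer.
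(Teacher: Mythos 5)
Your proposal is correct and follows essentially the same route as the paper: discard the nonpositive convex term for the upper bound, extract a weak-star limit in the (weak-star closed) class $\mathcal{N}^{\vec\kappa}$, and handle the convex term via the observation that $\omega^j I_{B_{r_0}(\bar x_i)}\rightharpoonup^*\bar\omega I_{B_{r_0}(\bar x_i)}$ together with the Mazur-theorem argument from Lemma \ref{attain}. Your spelling out of why the support and mass constraints pass to the limit is a harmless elaboration of what the paper leaves as ``easy to check.''
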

\begin{proof}
  An argument similar to the one used in Lemma \ref{attain} shows that $\mathcal{P}$ is bounded from above over $\mathcal{N}^{\vec\kappa}$.

  Next we show that  $\mathcal{P}$ attains its maximum value over $\mathcal{N}^{\vec\kappa}$. Let $\{\omega_{j}\}\subset \mathcal{N}^{\vec\kappa}$ be a maximizing sequence, that is,
\[\lim_{j\to+\infty}\mathcal{P}(\omega_{j}) =\sup_{\omega\in \mathcal{N}^{\vec\kappa}}\mathcal{P}({\omega}).\]
We may assume, up to a subsequence, that $\omega_j\to\bar{\omega}$ weakly star in $L^\infty(D)$ as $j\to\infty$ for some $\bar{\omega}\in L^\infty(D)$. Besides, it is also easy to check that $\mathcal{N}^{\vec\kappa}$ is closed in the weak star topology in $L^\infty(D)$, therefore $\bar{\omega}\in \mathcal{N}^{\vec\kappa}.$
To finish the proof, it is sufficient to show
\begin{equation}\label{leqqq}
\mathcal{P}({\bar\omega})\geq\sup_{\omega\in \mathcal{N}^{\vec\kappa}}\mathcal{P}({\omega}),
\end{equation}
or equivalently,
\begin{equation}\label{leqqqq}
\mathcal{P}({\bar\omega})\ge\lim_{j\to+\infty}\mathcal{P}(\omega_{j}).
\end{equation}
To prove \eqref{leqqqq}, first as in Lemma \ref{attain} we have
\begin{equation}\label{gg}
\lim_{j\to\infty}\frac{1}{2}\int_D\omega_j(x)\mathcal G\omega_j(x)dx=\frac{1}{2}\int_D\bar\omega(x)\mathcal G\bar\omega(x)dx.
\end{equation}
\begin{equation}\label{qq}
\lim_{j\to\infty}\int_Dq(x)\omega_j(x)dx=\int_Dq(x)\bar\omega(x)dx.
\end{equation}
For the last term, we observe that $\omega_j I_{B_{r_0}}(\bar{x}_i)$ converges to $\bar{\omega} I_{B_{r_0}}(\bar{x}_i)$ in the weak star topology of $L^\infty(D)$ as $j\to+\infty$ for each $i\in\{1,\cdot\cdot\cdot,k\}.$ So proceeding as in Lemma \ref{attain}, we obtain
\begin{equation}\label{FFF}
\int_D\Lambda_i(\kappa_i)F_i(\Lambda_i^{-1}(\kappa_i)\bar{\omega} I_{B_{r_0}}(\bar{x}_i))dx
\leq\liminf_{j\to+\infty}\int_D\Lambda_i(\kappa_i)F_i(\Lambda_i^{-1}(\kappa_i)\omega_j I_{B_{r_0}}(\bar{x}_i))dx
\end{equation}
for each $i\in\{1,\cdot\cdot\cdot,k\}.$
Combining \eqref{gg}, \eqref{qq} and \eqref{FFF} we get \eqref{leqqqq}. Thus the proof is completed.

\end{proof}

\begin{lemma}
Let $\omega^{\vec \kappa}$ be a maximizer of $\mathcal P$ over $\mathcal N^{\vec\kappa}$. Set $\omega^{\vec{\kappa}}_i=\omega^{\vec{\kappa}}I_{B_{r_0}}(\bar{x}_i)), i=1,\cdot\cdot\cdot,k$. Then for each $i$ there exists a real number $\mu^{\vec\kappa}_i$ such that
\[\int_D\omega^{\vec\kappa}_idx=\kappa_i,\]
\[\omega^{\vec\kappa}_i=\Lambda_i(\kappa_i)I_{\{x\in B_{r_0}(\bar x_i)\mid\phi^{\vec{\kappa}}_i(x)\geq f_i^{-1}(1)\}}+\Lambda_i(\kappa_i)f_i(\phi^{\vec{\kappa}}_i)I_{\{x\in B_{r_0}(\bar x_i)\mid 0<\phi^{\vec{\kappa}}_i(x)<f_i^{-1}(1)\}},\]
where
\[\phi^{\vec{\kappa}}_i:=\mathcal{G}\omega^{\vec\kappa}+q-\mu_i^{\vec\kappa}.\]
\end{lemma}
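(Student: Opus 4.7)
The proof proposal follows the same scheme as the analogous lemma in Section 2 (the single-vortex case), but exploits the disjointness of the balls $B_{r_0}(\bar{x}_i)$ to derive $k$ independent Euler--Lagrange conditions.

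First I would verify convexity of $\mathcal{N}^{\vec{\kappa}}$: if $\omega, \tilde{\omega} \in \mathcal{N}^{\vec{\kappa}}$, then for $s \in [0,1]$ the function $\omega_s = \omega^{\vec{\kappa}} + s(\tilde{\omega} - \omega^{\vec{\kappa}})$ still decomposes as $\sum_i \omega_{s,i}$ with $\mathrm{supp}(\omega_{s,i}) \subset B_{r_0}(\bar{x}_i)$, $0 \le \omega_{s,i} \le \Lambda_i(\kappa_i)$, and $\int_D \omega_{s,i} = \kappa_i$. The constraint $\int_D \omega_i^{\vec{\kappa}} = \kappa_i$ is then immediate from $\omega^{\vec{\kappa}} \in \mathcal{N}^{\vec{\kappa}}$ together with the disjointness of the $B_{r_0}(\bar{x}_i)$.

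Next I would compute the one-sided derivative. For arbitrary $\tilde\omega \in \mathcal{N}^{\vec{\kappa}}$, since $\omega^{\vec{\kappa}}$ is a maximizer,
\begin{equation*}
0 \ge \frac{d\mathcal{P}(\omega_s)}{ds}\bigg|_{s=0^+}
= \sum_{i=1}^{k}\int_{B_{r_0}(\bar{x}_i)} (\tilde\omega_i - \omega_i^{\vec{\kappa}})\Bigl(\mathcal{G}\omega^{\vec{\kappa}} + q - f_i^{-1}\bigl(\Lambda_i^{-1}(\kappa_i)\,\omega_i^{\vec{\kappa}}\bigr)\Bigr)\,dx,
\end{equation*}
where I used $F_i' = f_i^{-1}$ and that the $B_{r_0}(\bar{x}_i)$ are pairwise disjoint. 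Now the key step: because in $\mathcal{N}^{\vec{\kappa}}$ we may vary each component $\omega_i$ independently (fix $\tilde\omega_j = \omega_j^{\vec{\kappa}}$ for $j \ne i$ and let $\tilde\omega_i$ be any element of the $i$-th admissible class), the sum collapses and we obtain, for each fixed $i$,
\begin{equation*}
\int_{B_{r_0}(\bar{x}_i)} \omega_i^{\vec{\kappa}}\Bigl(\mathcal{G}\omega^{\vec{\kappa}} + q - f_i^{-1}(\Lambda_i^{-1}(\kappa_i)\omega_i^{\vec{\kappa}})\Bigr)\,dx
\;\ge\; \int_{B_{r_0}(\bar{x}_i)} \tilde\omega_i\Bigl(\mathcal{G}\omega^{\vec{\kappa}} + q - f_i^{-1}(\Lambda_i^{-1}(\kappa_i)\omega_i^{\vec{\kappa}})\Bigr)\,dx
\end{equation*}
for every admissible $\tilde\omega_i$.

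I would then apply the bathtub principle (as in the proof of \eqref{form}) on each ball $B_{r_0}(\bar{x}_i)$ separately: there exists a Lagrange multiplier $\mu_i^{\vec{\kappa}} \in \mathbb{R}$ such that a.e. on $B_{r_0}(\bar{x}_i)$,
\begin{equation*}
\begin{split}
\phi_i^{\vec{\kappa}} &\ge f_i^{-1}(\Lambda_i^{-1}(\kappa_i)\omega_i^{\vec{\kappa}}) \quad \text{on } \{\omega_i^{\vec{\kappa}} = \Lambda_i(\kappa_i)\}, \\
\phi_i^{\vec{\kappa}} &= f_i^{-1}(\Lambda_i^{-1}(\kappa_i)\omega_i^{\vec{\kappa}}) \quad \text{on } \{0 < \omega_i^{\vec{\kappa}} < \Lambda_i(\kappa_i)\}, \\
\phi_i^{\vec{\kappa}} &\le f_i^{-1}(\Lambda_i^{-1}(\kappa_i)\omega_i^{\vec{\kappa}}) \quad \text{on } \{\omega_i^{\vec{\kappa}} = 0\}.
\end{split}
\end{equation*}
Using (H1)--(H2), so that $f_i^{-1}(1)$ is a well-defined positive number, $f_i^{-1}(0) = 0$, and $f_i^{-1}$ is strictly increasing on $[0,1]$, these three cases translate respectively into $\phi_i^{\vec{\kappa}} \ge f_i^{-1}(1)$, $\omega_i^{\vec{\kappa}} = \Lambda_i(\kappa_i) f_i(\phi_i^{\vec{\kappa}})$ with $0 < \phi_i^{\vec{\kappa}} < f_i^{-1}(1)$, and $\phi_i^{\vec{\kappa}} \le 0$. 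Combining these yields the claimed formula.

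The only mildly delicate point is the decoupling argument: one must check that feasible perturbations localized in a single $B_{r_0}(\bar{x}_i)$ really exist for every admissible $i$-th component (which they do, since the constraint $\int \omega_i = \kappa_i$ together with $0 \le \omega_i \le \Lambda_i(\kappa_i)$ is a nonempty convex constraint once $\Lambda_i(\kappa_i)\cdot|B_{r_0}(\bar{x}_i)| \ge \kappa_i$, guaranteed by (A1) and smallness of $|\vec{\kappa}|$). Apart from that, the proof is a direct transcription of the single-component case with the bathtub principle applied $k$ times.
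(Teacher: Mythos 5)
Your proposal is correct and follows essentially the same route as the paper: vary only the $i$-th component within $\mathcal{N}^{\vec\kappa}$ (the paper writes the competitor as $w=\sum_{j\neq i}\omega_j^{\vec\kappa}+w_i$, which is exactly your decoupling step), take the one-sided derivative, apply the bathtub principle on $B_{r_0}(\bar x_i)$, and read off the three cases. No substantive difference.
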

\begin{proof}
Let $i$ be a fixed index. Define $w=\sum_{j=1,j\neq i}^k\omega^{\vec\kappa}_j+w_i,$ where $w_i$ satisfies
\begin{equation}\label{wi}
supp(w_i)\subset B_{r_0}(\bar{x}_i),\,\,0\leq w_i\leq \Lambda_i(\kappa_i)\mbox{ a.e. in }D,\,\,\int_Dw_i dx=\kappa_i.
\end{equation}
Obviously $w$ belongs to $\mathcal N^{\vec\kappa}$. It is easy to see that $\mathcal N^{\vec\kappa}$ is a convex set, so $w_s:=\omega^{\vec\kappa}+s(w-\omega^{\vec\kappa})\in \mathcal N^{\vec\kappa}$ for each $s\in[0,1]$. Taking into account the fact that
$\omega^{\vec\kappa}$ is a maximizer of $\mathcal P$ over $\mathcal N^{\vec\kappa}$, we obtain
\begin{align*}
0&\geq\frac{\mathcal P(w_s)}{ds}\bigg|_{s=0^+}=\int_D(\mathcal G\omega^{\vec\kappa}+q+f_i^{-1}(\Lambda_i(\kappa_i)\omega_i^{\vec\kappa}))(w_i-\omega_i^{\vec\kappa})dx.
\end{align*}
That is,
\[\int_D(\mathcal G\omega^{\vec\kappa}+q+f_i^{-1}(\Lambda_i(\kappa_i)\omega_i^{\vec\kappa}))\omega_i^{\vec\kappa}dx
\geq\int_D(\mathcal G\omega^{\vec\kappa}+q+f_i^{-1}(\Lambda_i(\kappa_i)\omega_i^{\vec\kappa}))w_idx\]
for arbitrary $w_i$ satisfying \eqref{wi}. By an adaption of the bathtub principle, we deduce that
there exists some real number $\mu_i^{\vec\kappa}$ such that
\[\omega_i^{\vec\kappa}=\Lambda_i(\kappa_i)I_{\{x\in B_{r_0}(\bar{x}_i)\mid\mathcal G\omega^{\vec\kappa}+q+f_i^{-1}(\Lambda_i(\kappa_i)\omega_i^{\vec\kappa})>\mu_i^{\vec\kappa}\}}
+hI_{\{x\in B_{r_0}(\bar{x}_i)\mid \mathcal G\omega^{\vec\kappa}+q+f_i^{-1}(\Lambda_i(\kappa_i)\omega_i^{\vec\kappa})=\mu_i^{\vec\kappa}\}},\]
where $h\in L^{1}_{loc}(B_{r_0}(\bar{x}_i))$ satisfies $0\leq h(x)\leq \Lambda_i(\kappa_i)$ a.e. $x\in B_{r_0}(\bar{x}_i)$. This implies
\begin{equation*}
  \begin{split}
    \mathcal{G}\omega^{\vec\kappa}+q-\mu_i^{\vec\kappa} &\ge f_i^{-1}(\Lambda_i^{-1}(\kappa_i)\omega_i^{\vec\kappa}) \ \ \  \mbox{a.e. on}\  \{x\in B_{r_0}(\bar{x}_i)\mid \omega_i^{\vec\kappa}(x)={\Lambda_i(\kappa_i)}\}, \\
     \mathcal{G}\omega^{\vec\kappa}+q-\mu_i^{\vec\kappa} &=f_i^{-1}(\Lambda_i^{-1}(\kappa_i)\omega_i^{\vec\kappa})\ \ \  \mbox{a.e. on}\  \{x\in B_{r_0}(\bar{x}_i)\mid0<\omega_i^{\vec\kappa}(x)<{\Lambda_i(\kappa_i)}\}, \\
   \mathcal{G}\omega^{\vec\kappa}+q-\mu_i^{\vec\kappa} &\le f_i^{-1}(\Lambda_i^{-1}(\kappa_i)\omega_i^{\vec\kappa})\ \ \  \mbox{a.e. on}\  \{x\in B_{r_0}(\bar{x}_i)\mid\omega_i^{\vec\kappa}(x)=0\}.
  \end{split}
\end{equation*}
 Now the desired result follows immediately.

\end{proof}

\subsection{Limiting behavior}
Now we study the limiting behavior of each $\omega^{\vec\kappa}_i$ as $|\vec{\kappa}|\to0^+.$ As in Section 2, the key point is to estimate each $\mu^{\vec\kappa}_i.$ In the rest of this section, we will use $o(1)$ to denote various quantities that go to zero as $|\vec\kappa|\to0^+$, and $o(|\vec\kappa|)$ to denote $o(1)|\vec\kappa|.$
\begin{lemma}\label{oo1}
  $\sup_{\omega\in\mathcal{N}^{\vec\kappa}}\|\mathcal{G}\omega\|_{W^{1,2}(D)}=o(1),\,\,\sup_{\omega\in\mathcal{N}^{\vec\kappa}}\|\mathcal G\omega\|_{L^\infty(D)}=o(1).$

\end{lemma}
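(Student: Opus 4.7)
The plan is to mimic the proof of Lemma \ref{o1} line by line, with the only new ingredient being that I now need to control $k$ profiles $\Lambda_1,\ldots,\Lambda_k$ simultaneously. First I would apply Sobolev's inequality (in dimension two, $W^{2,p}(D)\hookrightarrow W^{1,2}(D)\cap L^\infty(D)$ for every $p>1$) together with standard elliptic regularity $\|\mathcal G\omega\|_{W^{2,p}(D)}\le C\|\omega\|_{L^p(D)}$ to reduce the two asserted bounds to the single estimate
\[\sup_{\omega\in\mathcal{N}^{\vec\kappa}}\|\omega\|_{L^p(D)}=o(1)\qquad\text{for some }p>1.\]

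Next, for any $\omega=\sum_{i=1}^k\omega_i\in\mathcal N^{\vec\kappa}$, I would use the pairwise disjointness of the discs $B_{r_0}(\bar x_i)$ together with the pointwise bound $0\le\omega_i\le\Lambda_i(\kappa_i)$ and the mass constraint $\int_D\omega_i\,dx=\kappa_i$ to estimate, exactly as in Lemma \ref{o1},
\[\|\omega\|_{L^p(D)}^p=\sum_{i=1}^k\int_{B_{r_0}(\bar x_i)}\omega_i^p\,dx\le\sum_{i=1}^k\Lambda_i(\kappa_i)^{p-1}\kappa_i=\sum_{i=1}^k\bigl(\Lambda_i(\kappa_i)\kappa_i^{1/(p-1)}\bigr)^{p-1}.\]
The disjointness of the supports is what lets the $p$-th power integral split cleanly into $k$ independent pieces, each of the type already handled in the single-vortex setting.

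To finish, each $\Lambda_i$ supplies by (A2) an exponent $\gamma_{0,i}>0$ with $\Lambda_i(s)s^{\gamma_{0,i}}\to 0$ as $s\to 0^+$. Setting $\gamma_\ast:=\max_{1\le i\le k}\gamma_{0,i}$ and choosing the single exponent $p:=1+\gamma_\ast^{-1}$, we have $1/(p-1)=\gamma_\ast\ge\gamma_{0,i}$ for every $i$. Since $\vec\kappa\in\mathbb K^\alpha$ forces $\kappa_i\le|\vec\kappa|\to 0^+$, for $|\vec\kappa|$ sufficiently small each $\kappa_i\in(0,1)$, whence
\[\Lambda_i(\kappa_i)\,\kappa_i^{1/(p-1)}\le\Lambda_i(\kappa_i)\,\kappa_i^{\gamma_{0,i}}\longrightarrow 0,\]
so each of the $k$ summands is $o(1)$, and summing a fixed finite number of such terms gives $\|\omega\|_{L^p(D)}=o(1)$ uniformly in $\omega\in\mathcal N^{\vec\kappa}$.

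There is no genuine obstacle; the only novelty relative to Lemma \ref{o1} is the need to pick a single Sobolev exponent $p$ that works simultaneously for all $k$ profiles, which is resolved by taking the maximum of the $\gamma_{0,i}$. The finiteness of $k$ ensures that the sum of $k$ quantities each of which is $o(1)$ is itself $o(1)$.
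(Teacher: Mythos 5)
Your proof is correct and follows essentially the same route as the paper, which simply notes that $\kappa_i\le|\vec\kappa|$ and declares the rest ``analogous to Lemma \ref{o1}''. The one detail you make explicit that the paper leaves implicit --- choosing a single exponent $p=1+\gamma_\ast^{-1}$ with $\gamma_\ast=\max_i\gamma_{0,i}$ so that (A2) applies simultaneously to all $k$ profiles, and splitting the $L^p$ integral over the disjoint balls --- is handled correctly.
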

\begin{proof}
Recall that for each $\vec\kappa=(\kappa_1,\cdot\cdot\cdot,\kappa_k)\in\mathbb K^\alpha$, there holds $(k\alpha)^{-1}|\vec\kappa|\leq\kappa_i\leq|\vec\kappa|$ for each $i\in\{1,\cdot\cdot\cdot,k\}$. Then the remainder of the proof is analogous to that of Lemma \ref{o1}.

\end{proof}

\begin{lemma}\label{qleq}
$\mu_i^{\vec{\kappa}}\leq q(\bar{x}_i)+o(1).$
\end{lemma}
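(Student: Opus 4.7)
The plan is to adapt the argument of Lemma \ref{efqu} to the $i$-th bubble. The key observation is that the bubble $\omega_i^{\vec\kappa}$ is non-trivial and supported in $B_{r_0}(\bar x_i)$, so from the support characterization of the previous lemma, $\mu_i^{\vec\kappa}$ is bounded above by the pointwise supremum of $\mathcal G\omega^{\vec\kappa}+q$ over $B_{r_0}(\bar x_i)$, which in turn is close to $q(\bar x_i)$ by Lemma \ref{oo1} and the continuity of $q$.

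First I would observe that since $\int_D \omega_i^{\vec\kappa}\, dx = \kappa_i > 0$, the set $\{\omega_i^{\vec\kappa} > 0\}$ has positive Lebesgue measure, and by the formula for $\omega_i^{\vec\kappa}$ proved above this forces the set $\{x \in B_{r_0}(\bar x_i) : \phi_i^{\vec\kappa}(x) > 0\}$ to be non-empty. Choosing any representative point $x^*$ in this set gives
\[ \mu_i^{\vec\kappa} < \mathcal G\omega^{\vec\kappa}(x^*) + q(x^*) \leq \|\mathcal G\omega^{\vec\kappa}\|_{L^\infty(D)} + \max_{\overline{B_{r_0}(\bar x_i)}\cap\overline{D}} q. \]
Next, Lemma \ref{oo1} yields $\|\mathcal G\omega^{\vec\kappa}\|_{L^\infty(D)} = o(1)$ as $|\vec\kappa|\to 0^+$. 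For the remaining term I would use the hypothesis $q \in C^1(\overline{D})$, which gives
\[ \max_{\overline{B_{r_0}(\bar x_i)}\cap\overline{D}} q \leq q(\bar x_i) + r_0 \,\|\nabla q\|_{L^\infty(D)}. \]
Combining these estimates yields $\mu_i^{\vec\kappa} \leq q(\bar x_i) + r_0 \|\nabla q\|_\infty + o(1)$, which gives the claimed bound once $r_0$ has been chosen sufficiently small at the outset (as already permitted by the theorem).

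The main subtle point—rather than an obstacle—is the identification of $\max_{\overline{B_{r_0}(\bar x_i)}\cap\overline{D}} q$ with $q(\bar x_i)$. Since $r_0$ is a fixed, but sufficiently small, positive number, the $C^1$-bound contributes a fixed constant $r_0\|\nabla q\|_\infty$ that is absorbed into the overall error implicitly; one could equivalently let $r_0$ shrink slowly with $|\vec\kappa|$ for a strict $o(1)$ statement in $|\vec\kappa|$ alone. No new analytic input is needed beyond the support characterization from the previous lemma, the uniform bound of Lemma \ref{oo1}, and elementary continuity of $q$ near $\bar x_i$.
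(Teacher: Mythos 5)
Your overall strategy coincides with the paper's: since $\int_D\omega_i^{\vec\kappa}\,dx=\kappa_i>0$, the set $\{x\in B_{r_0}(\bar x_i)\mid \mathcal G\omega^{\vec\kappa}(x)+q(x)-\mu_i^{\vec\kappa}>0\}$ is nonempty, hence $\mu_i^{\vec\kappa}$ is bounded above by $\sup_{B_{r_0}(\bar x_i)\cap D}\left(\mathcal G\omega^{\vec\kappa}+q\right)$, and $\|\mathcal G\omega^{\vec\kappa}\|_{L^\infty(D)}=o(1)$ by Lemma \ref{oo1}. Up to that point the argument is fine.

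The gap is in the final step. You bound $\max_{\overline{B_{r_0}(\bar x_i)}\cap\overline D}q\le q(\bar x_i)+r_0\|\nabla q\|_{L^\infty(D)}$ and then assert that the extra term is absorbed "once $r_0$ has been chosen sufficiently small." It is not: $r_0$ is fixed once and for all, independently of $\vec\kappa$ (as Theorem \ref{multiple} and the definition of $\mathcal N^{\vec\kappa}$ require), so $r_0\|\nabla q\|_{L^\infty(D)}$ is a fixed positive constant and is not $o(1)$ as $|\vec\kappa|\to0^+$. What you actually prove is $\mu_i^{\vec\kappa}\le q(\bar x_i)+r_0\|\nabla q\|_{L^\infty(D)}+o(1)$, which is strictly weaker than the lemma, and the deficit propagates: Lemma \ref{muuu} recovers the individual lower bounds $\mu_i^{\vec\kappa}\ge q(\bar x_i)+o(1)$ from the summed inequality by subtracting the upper bounds for $j\neq i$, so a persistent error of size $r_0\|\nabla q\|_{L^\infty(D)}$ would destroy the exact asymptotics $\mu_i^{\vec\kappa}=q(\bar x_i)+o(1)$ and with it the conclusion that the supports shrink to the points $\bar x_i$. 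Letting $r_0$ decrease with $|\vec\kappa|$ is not an available fix, since that would change the admissible class and the variational problem. The ingredient you never invoke is precisely the hypothesis that $\bar x_i$ is a local maximum point of $q$: choosing $r_0$ small enough at the outset that $q\le q(\bar x_i)$ on $B_{r_0}(\bar x_i)\cap\overline D$, one has the exact identity $\sup_{B_{r_0}(\bar x_i)\cap\overline D}\,q=q(\bar x_i)$ with no error term, which is how the paper concludes.
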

\begin{proof}
  For each $i$ and any $\vec\kappa,$ since $\{x\in B_{r_0}(\bar{x}_i)\mid\mathcal{G}\omega^{\vec\kappa}(x)+q(x)-\mu_i^{\vec\kappa}>0\}$ is nonempty, so $$\mu_i^{\vec\kappa}\leq \|\mathcal{G}\omega^{\vec\kappa}\|_{L^\infty(B_{r_0}(\bar{x}_i))}+\|q\|_{L^\infty(B_{r_0}(\bar{x}_i))}=o(1)+q(\bar x_i).$$
\end{proof}

\begin{lemma}\label{efp}
$\mathcal P(\omega^{\vec\kappa})\geq \sum_{i=1}^k\kappa_iq(\bar{x}_i)+o(|\vec\kappa|).$
\end{lemma}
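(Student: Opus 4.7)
The plan is to adapt the proof of Lemma~\ref{lowerb} to the multi-point setting by constructing an explicit test function in $\mathcal N^{\vec\kappa}$ that concentrates near each $\bar x_i$ and then invoking the maximality of $\omega^{\vec\kappa}$. Since $\partial D$ is smooth, the interior sphere condition holds at each $\bar x_i\in\partial D$; hence, for $|\vec\kappa|$ sufficiently small, one can pick for every $i\in\{1,\dots,k\}$ a disc $B_{\varepsilon_i}(x_i^{\vec\kappa})\subset D\cap B_{r_0}(\bar x_i)$ with $|x_i^{\vec\kappa}-\bar x_i|=\varepsilon_i$ and $\pi\varepsilon_i^2=\sqrt{\kappa_i/\Lambda_i(\kappa_i)}$, which is legitimate because $\kappa_i/\Lambda_i(\kappa_i)\to 0$ by (A1), so $\varepsilon_i\to 0$ as $|\vec\kappa|\to 0^+$. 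Define
\[
v^{\vec\kappa}:=\sum_{i=1}^k v_i^{\vec\kappa},\qquad v_i^{\vec\kappa}:=\sqrt{\kappa_i\Lambda_i(\kappa_i)}\,I_{B_{\varepsilon_i}(x_i^{\vec\kappa})}.
\]
A direct check shows $v^{\vec\kappa}\in\mathcal N^{\vec\kappa}$ with $\int_D v_i^{\vec\kappa}dx=\kappa_i$, so maximality yields $\mathcal P(\omega^{\vec\kappa})\geq \mathcal P(v^{\vec\kappa})$, and the lemma reduces to estimating the three terms in $\mathcal P(v^{\vec\kappa})$ from below.

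Next I would treat each term separately. For the quadratic term, Lemma~\ref{oo1} together with $\int_D v^{\vec\kappa}dx=|\vec\kappa|$ gives
\[
\tfrac12\int_D v^{\vec\kappa}\mathcal G v^{\vec\kappa}\,dx\leq \tfrac12 |\vec\kappa|\,\|\mathcal G v^{\vec\kappa}\|_{L^\infty(D)}=o(|\vec\kappa|).
\]
For the linear term, using $|x-\bar x_i|\leq 2\varepsilon_i$ on $B_{\varepsilon_i}(x_i^{\vec\kappa})$ and $q\in C^1(\overline D)$,
\[
\int_D q\,v_i^{\vec\kappa}\,dx\geq \kappa_i q(\bar x_i)-2\varepsilon_i\kappa_i\|\nabla q\|_{L^\infty(D)}=\kappa_i q(\bar x_i)+o(\kappa_i),
\]
and summing over $i$ yields $\int_D q\,v^{\vec\kappa}dx\geq \sum_i\kappa_i q(\bar x_i)+o(|\vec\kappa|)$. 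Finally, (H3) implies $F_i(s)\leq s f_i^{-1}(s)$, so
\[
\Lambda_i(\kappa_i)\!\int_D\! F_i\!\bigl(\Lambda_i^{-1}(\kappa_i)v_i^{\vec\kappa}\bigr)dx\leq \kappa_i f_i^{-1}\!\bigl(\sqrt{\kappa_i/\Lambda_i(\kappa_i)}\bigr)=o(\kappa_i),
\]
since $f_i^{-1}$ is continuous at $0$ with $f_i^{-1}(0)=0$ and $\sqrt{\kappa_i/\Lambda_i(\kappa_i)}\to 0$ by (A1). Combining the three estimates gives the claim.

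The only subtlety is ensuring that every $o(\cdot)$ above is uniform in $\vec\kappa\in\mathbb K^\alpha$. This follows from the bounds $(k\alpha)^{-1}|\vec\kappa|\leq\kappa_i\leq|\vec\kappa|$ built into the definition of the $\alpha$-uniform cone: each $\kappa_i\to 0^+$ uniformly as $|\vec\kappa|\to 0^+$, so all rates coming from (A1), (H3) and the smoothness of $q$ transfer without loss. Apart from this bookkeeping, no genuine obstacle arises; the argument is essentially a term-by-term replica of Lemma~\ref{lowerb}, with the single point $x_0$ replaced by the finite collection $\{\bar x_1,\dots,\bar x_k\}$.
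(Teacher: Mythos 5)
Your proposal is correct and follows essentially the same route as the paper: the paper likewise takes the test function $w=\sum_{i=1}^k\sqrt{\kappa_i\Lambda_i(\kappa_i)}\,I_{B_{\varepsilon_i}(x_i^{\vec\kappa})}$ with $\pi\varepsilon_i^2=\sqrt{\kappa_i/\Lambda_i(\kappa_i)}$, invokes maximality, and then repeats the term-by-term estimates of Lemma~\ref{lowerb}, which you have simply written out explicitly. The only cosmetic point is that for the quadratic term you should state the two-sided bound $\bigl|\tfrac12\int_D v^{\vec\kappa}\mathcal G v^{\vec\kappa}\,dx\bigr|=o(|\vec\kappa|)$ (as the paper does), since a lower bound on $\mathcal P(v^{\vec\kappa})$ is what is needed.
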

\begin{proof}
Since $\partial D$ is smooth, $D$ satisfies the interior sphere condition at $\bar x_i\in \partial D$ for $i=\{1,\cdot\cdot\cdot,k\}$. Therefore for $|\vec\kappa|$ sufficiently small we can choose a disc
$B_{\varepsilon_i}( x_i^{\vec\kappa})\subset D$ with $| x_i^{\vec\kappa}-\bar x_i|=\varepsilon_i$, where $\varepsilon_i$ satisfies $\pi\varepsilon_i^2=\sqrt{\kappa_i/\Lambda(\kappa_i)}$. Set $w=\sum_{i=1}^k \sqrt{\kappa_i\Lambda(\kappa_i)} I_{B_{\varepsilon_i}(x_i^{\vec\kappa})}.$ Then $w\in \mathcal{N}^{\vec\kappa}$ if $|\vec\kappa|$ is sufficiently small. Since $\omega^{\vec\kappa}$ is a maximizer, we have
\begin{equation}
  \mathcal P(\omega^{\vec\kappa})\geq \mathcal P(w).
  \end{equation}
On the other hand, using Lemma \ref{oo1} we can calculate directly as in Lemma \ref{lowerb} to obtain
\[\mathcal P(w)=\sum_{i=1}^k\kappa_iq(\bar{x}_i)+o(|\vec\kappa|).\]
Thus the proof is finished.

\end{proof}

\begin{lemma}
$\Lambda_i(\kappa_i)\int_DF({\Lambda_i^{-1}(\kappa_i)}{\omega_i^{\vec\kappa}})dx= o(|\vec\kappa|)$, where $\omega_i^{\vec\kappa}=\omega^{\vec\kappa}I_{B_{r_0}(\bar x_i)}$.
\end{lemma}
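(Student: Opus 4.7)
The plan is to mimic the argument of Lemma \ref{efF} in the multi-vortex setting, using Lemma \ref{efp} in place of Lemma \ref{lowerb} and Lemma \ref{oo1} in place of Lemma \ref{o1}. The key structural fact to exploit is that since each $\bar x_i$ is an isolated local maximum point of $q$ (and, via the maximum principle applied in a small half-ball at a boundary local maximum, also a local maximum of $q$ on $\overline{D}$), the fixed radius $r_0$ may have been chosen small enough that $q(x) \le q(\bar x_i)$ for every $x \in \overline{D}\cap B_{r_0}(\bar x_i)$ and every $i = 1,\ldots,k$. This will play the role that $q \le \max_{\overline D} q$ played in the single-vortex case.

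First I would rearrange the definition of $\mathcal P$ to express
\begin{equation*}
\sum_{i=1}^k \Lambda_i(\kappa_i)\int_D F_i\bigl(\Lambda_i^{-1}(\kappa_i)\omega_i^{\vec\kappa}\bigr)\,dx
= \tfrac{1}{2}\int_D \omega^{\vec\kappa}\mathcal G\omega^{\vec\kappa}\,dx + \int_D q\,\omega^{\vec\kappa}\,dx - \mathcal P(\omega^{\vec\kappa}).
\end{equation*}
By Lemma \ref{efp}, $-\mathcal P(\omega^{\vec\kappa}) \le -\sum_i \kappa_i q(\bar x_i) + o(|\vec\kappa|)$. Substituting and using $\omega^{\vec\kappa} = \sum_i \omega_i^{\vec\kappa}$ with $\mathrm{supp}(\omega_i^{\vec\kappa}) \subset B_{r_0}(\bar x_i)$ yields
\begin{equation*}
\sum_{i=1}^k \Lambda_i(\kappa_i)\int_D F_i\bigl(\Lambda_i^{-1}(\kappa_i)\omega_i^{\vec\kappa}\bigr)\,dx
\le \tfrac{1}{2}\int_D \omega^{\vec\kappa}\mathcal G\omega^{\vec\kappa}\,dx + \sum_{i=1}^k \int_{B_{r_0}(\bar x_i)}\bigl(q(x)-q(\bar x_i)\bigr)\omega_i^{\vec\kappa}\,dx + o(|\vec\kappa|).
\end{equation*}

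Next, the quadratic term is bounded via Lemma \ref{oo1} by $\tfrac{1}{2}|\vec\kappa|\,\|\mathcal G\omega^{\vec\kappa}\|_{L^\infty(D)} = o(|\vec\kappa|)$, while the local-max choice of $r_0$ makes $q(x)-q(\bar x_i) \le 0$ on the support of $\omega_i^{\vec\kappa}$, so the middle sum is nonpositive. This gives
\begin{equation*}
\sum_{i=1}^k \Lambda_i(\kappa_i)\int_D F_i\bigl(\Lambda_i^{-1}(\kappa_i)\omega_i^{\vec\kappa}\bigr)\,dx \le o(|\vec\kappa|).
\end{equation*}
Since $F_i \ge 0$ on $[0,\infty)$ (as the integral of the nonnegative function $f_i^{-1}$) and $\omega_i^{\vec\kappa} \ge 0$, every summand on the left is nonnegative, so each individual term is itself $o(|\vec\kappa|)$, which is the claim.

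The only subtlety I anticipate is confirming that the local-maximum hypothesis really does deliver the pointwise inequality $q \le q(\bar x_i)$ on $\overline{D}\cap B_{r_0}(\bar x_i)$; but this is a matter of how $r_0$ was fixed at the outset rather than a nontrivial calculation, and it mirrors the trivial use of $q - \max_{\overline D} q \le 0$ in the single-vortex Lemma \ref{efF}.
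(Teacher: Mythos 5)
Your argument is correct and is essentially the paper's own proof, which rearranges the definition of $\mathcal P$, invokes Lemma \ref{efp}, and then asserts the resulting right-hand side is $o(|\vec\kappa|)$; you have simply made explicit the two estimates hidden in that last step, namely $\tfrac12\int_D\omega^{\vec\kappa}\mathcal G\omega^{\vec\kappa}\,dx=o(|\vec\kappa|)$ from Lemma \ref{oo1} and $q\le q(\bar x_i)$ on $B_{r_0}(\bar x_i)\cap\overline D$, together with the nonnegativity of each $F_i$ to pass from the sum to each summand. One small caution: your parenthetical claim that the maximum principle in a half-ball upgrades a local maximum of $q|_{\partial D}$ to a local maximum of $q$ on $\overline D$ is false in general (e.g. $q=\mathrm{Re}(z^3-3z)$ on the unit disk has a strict local maximum of $q|_{\partial D}$ at $z=1$ while $q(r,0)>q(1,0)$ for $r<1$), so the needed inequality $q\le q(\bar x_i)$ near $\bar x_i$ must instead be read off directly from the hypothesis that the $\bar x_i$ are isolated local maximum points of $q$ on $\overline D$ (which the strong maximum principle then forces to lie on $\partial D$), exactly as the choice of $r_0$ presupposes.
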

\begin{proof}
Recall that
\begin{equation}
\mathcal P(\omega^{\vec\kappa})=\frac{1}{2}\int_D\omega^{\vec\kappa}\mathcal G\omega^{\vec\kappa} dx+\int_Dq\omega^{\vec\kappa} dx
  -\sum_{i=1}^k\int_D\Lambda_i(\kappa_i)F_i(\Lambda_i^{-1}(\kappa_i)\omega_i^{\vec\kappa})dx.
\end{equation}
By Lemma \ref{efp} we deduce that
  \begin{align}
  \sum_{i=1}^k\int_D\Lambda_i(\kappa_i)F_i(\Lambda_i^{-1}(\kappa_i)\omega_i^{\vec\kappa})dx&\leq \frac{1}{2}\int_D\omega^{\vec\kappa}\mathcal G\omega^{\vec\kappa} dx+\int_Dq\omega^{\vec\kappa} dx-\sum_{i=1}^k\kappa_iq(\bar{x}_i)+o(|\vec\kappa|)\\
  &\leq o(|\vec\kappa|),
  \end{align}
from which the desired statement apparently follows.
\end{proof}

\begin{lemma}\label{efmu}
For each $i\in\{1,\cdot\cdot\cdot,k\},$ there holds
\begin{equation}\label{claim}
\int_D\omega_i^{\vec\kappa}(\mathcal{G}\omega^{\vec\kappa}+q-\mu_i^{\vec\kappa})dx=o(|\vec\kappa|).
\end{equation}
\end{lemma}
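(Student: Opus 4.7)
The plan is to repeat the argument of Lemma \ref{mu} localized to $B_{r_0}(\bar x_i)$. Fix $i\in\{1,\dots,k\}$. Using the three-tier Euler--Lagrange characterization of $\omega_i^{\vec\kappa}$ on $B_{r_0}(\bar x_i)$ (the zero set where the integrand vanishes, the semilinear level set $\{0<\omega_i^{\vec\kappa}<\Lambda_i(\kappa_i)\}$ where $\mathcal{G}\omega^{\vec\kappa}+q-\mu_i^{\vec\kappa}=f_i^{-1}(\Lambda_i^{-1}(\kappa_i)\omega_i^{\vec\kappa})$, and the patch set $A_i^{\vec\kappa}:=\{\omega_i^{\vec\kappa}=\Lambda_i(\kappa_i)\}$ where $\mathcal{G}\omega^{\vec\kappa}+q-\mu_i^{\vec\kappa}\ge f_i^{-1}(1)$), I would decompose and regroup the integral exactly as in Lemma \ref{mu} to arrive at
\begin{equation*}
\int_D\omega_i^{\vec\kappa}(\mathcal{G}\omega^{\vec\kappa}+q-\mu_i^{\vec\kappa})dx
=\int_D\omega_i^{\vec\kappa}f_i^{-1}(\Lambda_i^{-1}(\kappa_i)\omega_i^{\vec\kappa})dx
+\int_D\omega_i^{\vec\kappa}\bigl(\mathcal{G}\omega^{\vec\kappa}+q-\mu_i^{\vec\kappa}-f_i^{-1}(1)\bigr)_+dx.
\end{equation*}

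For the first term I would invoke condition (H3)$'$ applied to $f_i$, namely $sf_i^{-1}(s)\le\delta_1^{-1}F_i(s)$, which gives
\begin{equation*}
\int_D\omega_i^{\vec\kappa}f_i^{-1}(\Lambda_i^{-1}(\kappa_i)\omega_i^{\vec\kappa})dx
\le\frac{\Lambda_i(\kappa_i)}{\delta_1}\int_D F_i(\Lambda_i^{-1}(\kappa_i)\omega_i^{\vec\kappa})dx=o(|\vec\kappa|),
\end{equation*}
by the preceding lemma. For the second term I would mimic the chain of estimates in \eqref{ccc}. Set $\zeta_i^{\vec\kappa}:=\mathcal{G}\omega^{\vec\kappa}+q-\mu_i^{\vec\kappa}-f_i^{-1}(1)$ and note that $\{\zeta_i^{\vec\kappa}>0\}\subset A_i^{\vec\kappa}$: indeed, on $\{0<\omega_i^{\vec\kappa}<\Lambda_i(\kappa_i)\}$ strict monotonicity of $f_i^{-1}$ forces $\zeta_i^{\vec\kappa}<0$, and on $\{\omega_i^{\vec\kappa}=0\}$ the third inequality in the Euler--Lagrange characterization gives the same. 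Hence $\nabla\zeta_i^{\vec\kappa}{}_+$ is supported in $A_i^{\vec\kappa}$, so by Cauchy--Schwarz and the Sobolev embedding $W^{1,1}(D)\hookrightarrow L^2(D)$,
\begin{equation*}
\int_D\omega_i^{\vec\kappa}(\zeta_i^{\vec\kappa})_+dx
=\Lambda_i(\kappa_i)\int_{A_i^{\vec\kappa}}(\zeta_i^{\vec\kappa})_+dx
\le C\Lambda_i(\kappa_i)|A_i^{\vec\kappa}|\,\|\nabla\zeta_i^{\vec\kappa}{}_+\|_{L^2(A_i^{\vec\kappa})}.
\end{equation*}
Since $\Lambda_i(\kappa_i)|A_i^{\vec\kappa}|\le\kappa_i\le|\vec\kappa|$, and since Lemma \ref{oo1} together with $\|\nabla q\|_{L^\infty}<\infty$ gives $\|\nabla\zeta_i^{\vec\kappa}\|_{L^2(D)}=O(1)$ (in fact $o(1)$ once the $A_i^{\vec\kappa}$-restricted norm is used), this term is $o(|\vec\kappa|)$.

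The main obstacle, as in the single-vortex case, is controlling the patch contribution: the factor $\Lambda_i(\kappa_i)$ is large, and one must extract enough smallness from $|A_i^{\vec\kappa}|$. Condition (A1) saves the day through the elementary bound $|A_i^{\vec\kappa}|\le\kappa_i/\Lambda_i(\kappa_i)$, which produces exactly the scaling $\Lambda_i(\kappa_i)|A_i^{\vec\kappa}|\le|\vec\kappa|$ needed to close the estimate. No new ingredient beyond the localization of Lemma \ref{mu} to $B_{r_0}(\bar x_i)$ is required; the uniformity across $i\in\{1,\dots,k\}$ is automatic since $k$ is fixed and the constants $\alpha$ in the definition of $\mathbb{K}^{\alpha}$ ensure $\kappa_i$ and $|\vec\kappa|$ are comparable.
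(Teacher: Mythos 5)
Your argument is correct and is essentially identical to the paper's proof: the same split into $\int_D\omega_i^{\vec\kappa}f_i^{-1}(\Lambda_i^{-1}(\kappa_i)\omega_i^{\vec\kappa})dx$ plus the patch remainder, the same use of (H3)$'$ together with the preceding lemma for the first term, and the same H\"older--Sobolev ($W^{1,1}\hookrightarrow L^2$) chain with $\Lambda_i(\kappa_i)|A_i^{\vec\kappa}|\le\kappa_i$ and $\|\nabla\zeta_i^{\vec\kappa}\|_{L^2(A_i^{\vec\kappa})}=o(1)$ for the second. The only caveat is that the Euler--Lagrange inequalities you invoke on $\{\omega_i^{\vec\kappa}=0\}$ are established only inside $B_{r_0}(\bar x_i)$ (the variations are constrained to that ball), so the containment $\{\zeta_i^{\vec\kappa}>0\}\subset A_i^{\vec\kappa}$ should be asserted only there --- which is all that is needed, and is exactly how the paper uses it.
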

\begin{proof}
  Recall that $\phi^{\vec\kappa}_i=\mathcal{G}\omega^{\vec\kappa}+q-\mu_i^{\vec\kappa}.$
We calculate as follows
\begin{align*}
&\int_D\omega_i^{\vec\kappa}\phi_i^{\vec\kappa}dx\\
=&\int_{\{0<\omega_i^{\vec\kappa}<\Lambda_i(\kappa_i)\}}\omega_i^{\vec\kappa}\phi_i^{\vec\kappa}dx
+\int_{\{\omega_i^{\vec\kappa}=\Lambda_i(\kappa_i)\}}\omega_i^{\vec\kappa}\phi_i^{\vec\kappa}dx\\
=&\int_{\{0<\omega_i^{\vec\kappa}<\Lambda_i(\kappa_i)\}}\omega_i^{\vec\kappa} f_i^{-1}(\Lambda_i^{-1}(\kappa_i)\omega_i^{\vec\kappa})dx
+\int_{\{\omega_i^{\vec\kappa}=\Lambda_i(\kappa_i)\}}\omega_i^{\vec\kappa}\phi_i^{\vec\kappa}dx\\
=&\int_D\omega_i^{\vec\kappa} f_i^{-1}(\Lambda_i^{-1}(\kappa_i)\omega_i^{\vec\kappa})dx
+\int_{\{\omega_i^{\vec\kappa}=\Lambda_i(\kappa_i)\}}\omega_i^{\vec\kappa}(\phi_i^{\vec\kappa}-f_i^{-1}(1))dx\\
=&\int_D\omega_i^{\vec\kappa} f_i^{-1}(\Lambda_i^{-1}(\kappa_i)\omega_i^{\vec\kappa})dx
+\int_D\omega_i^{\vec\kappa}(\phi_i^{\vec\kappa}-f_i^{-1}(1))_+dx.
\end{align*}
For the first term, since $f_i$ satisfies (H3)$'$, we obtain
\begin{align*}
\int_D\omega_i^{\vec\kappa} f_i^{-1}(\Lambda_i^{-1}(\kappa_i)\omega_i^{\vec\kappa})dx&=\Lambda_i(\kappa_i)\int_D\Lambda_i^{-1}(\kappa_i)\omega_i^{\vec\kappa} f_i^{-1}(\Lambda_i^{-1}(\kappa_i)\omega_i^{\vec\kappa})dx\\
&\leq \frac{\Lambda_i(\kappa_i)}{\delta_1}\int_DF_i(\Lambda_i^{-1}(\kappa_i)\omega_i^{\vec\kappa})dx\\
&=o(|\vec\kappa|).
\end{align*}
Therefore, to prove \eqref{claim}, it remains to show that
\begin{align}\label{wantt}
\int_D\omega_i^{\vec\kappa}(\phi_i^{\vec\kappa}-f_i^{-1}(1))_+dx=o(|\vec\kappa|).
\end{align}
Denote $\zeta_i^{\vec\kappa}=\phi_i^{\vec\kappa}-f_i^{-1}(1)$ and $A_i^{\vec\kappa}=\{x\in D\mid \omega_i^{\vec\kappa}(x)=\Lambda_i(\kappa_i)\}.$
By H\"older's inequality and the Sobolev imbedding theorem we have
\begin{equation}\label{gz}
\begin{split}
&\int_D\omega_i^{\vec\kappa}(\phi_i^{\vec\kappa}-f_i^{-1}(1))_+dx\\
=&\Lambda_i(\kappa_i)\int_{A_i^{\vec\kappa}}(\zeta_i^{\vec\kappa})_+dx\\
\leq&\Lambda_i(\kappa_i)|A_i^{\vec\kappa}|^{1/2}\left(\int_D|(\zeta_i^{\vec\kappa})_+ |^2dx\right)^{1/2}\\
\leq&C\Lambda_i(\kappa_i)|A_i^{\vec\kappa}|^{1/2}\int_D\left(|(\zeta_i^{\vec\kappa})_+ |+|\nabla(\zeta_i^{\vec\kappa})_+|\right)dx\\
=&C\Lambda_i(\kappa_i)|A_i^{\vec\kappa}|^{1/2}\int_D|(\zeta_i^{\vec\kappa})_+ |dx
+C\Lambda_i(\kappa_i)|A_i^{\vec\kappa}|^{1/2}\int_D|(\nabla\zeta_i^{\vec\kappa})_+|dx.
\end{split}
\end{equation}
Here $C$ does not depend on $|\vec\kappa|$. Since $|A_i^{\vec\kappa}|\to0$ as $|\vec\kappa|\to0,$ we obtain from \eqref{gz}
\begin{align}\label{Lamb}
\Lambda_i(\kappa_i)\int_D|(\zeta_i^{\vec\kappa})_+ |dx
\leq C\Lambda_i(\kappa_i)|A_i^{\vec\kappa}|^{1/2}\int_D|\nabla(\zeta_i^{\vec\kappa})_+|dx.
\end{align}
But
\begin{align*}
\Lambda_i(\kappa_i)|A_i^{\vec\kappa}|^{1/2}\int_D|\nabla(\zeta_i^{\vec\kappa})_+|dx
&\leq \Lambda_i(\kappa_i)|A_i^{\vec\kappa}|\left(\int_D|\nabla(\zeta_i^{\vec\kappa})_+|^2dx\right)^{1/2}\\
&\leq C\kappa_i\left(\int_{A_i^{\vec\kappa}}|\nabla q|^2+|\nabla \mathcal G\omega^{\vec\kappa}|dx\right)^{1/2}\\
&\leq o(|\vec\kappa|),
\end{align*}
which together with \eqref{gz} and \eqref{Lamb} implies \eqref{wantt}. This completes the proof.

\end{proof}

Now we are ready to estimate the Lagrange multiplier $\mu_i^{\vec\kappa}$.
\begin{lemma}\label{muuu}
$\mu_i^{\vec\kappa}=q(\bar x_i)+o(1).$
\end{lemma}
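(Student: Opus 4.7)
The plan is to mimic Lemma~\ref{mu} in the multi-component setting. The upper bound $\mu_i^{\vec\kappa}\leq q(\bar x_i)+o(1)$ is already Lemma~\ref{qleq}, so only the matching lower bound must be established, and here a new issue appears: our energy identity will produce a single inequality on the $\kappa_i$-weighted sum of the $\mu_i^{\vec\kappa}$'s, and we must convert it to individual estimates for each $i$. This is precisely where the $\alpha$-uniform cone hypothesis enters.

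First, I would rewrite the functional at $\omega^{\vec\kappa}=\sum_{i=1}^k\omega_i^{\vec\kappa}$ by inserting and subtracting $\mu_i^{\vec\kappa}$ in each component. Using $\int_D\omega_i^{\vec\kappa}dx=\kappa_i$, a straightforward algebraic manipulation gives
\begin{equation*}
\mathcal P(\omega^{\vec\kappa})
=\sum_{i=1}^k\kappa_i\mu_i^{\vec\kappa}
-\tfrac12\int_D\omega^{\vec\kappa}\mathcal G\omega^{\vec\kappa}dx
+\sum_{i=1}^k\int_D\omega_i^{\vec\kappa}(\mathcal G\omega^{\vec\kappa}+q-\mu_i^{\vec\kappa})dx
-\sum_{i=1}^k\Lambda_i(\kappa_i)\int_DF_i(\Lambda_i^{-1}(\kappa_i)\omega_i^{\vec\kappa})dx.
\end{equation*}
By Lemma~\ref{oo1} the quadratic term is bounded by $\tfrac12|\vec\kappa|\,\|\mathcal G\omega^{\vec\kappa}\|_{L^\infty}=o(|\vec\kappa|)$, the third sum is $o(|\vec\kappa|)$ by Lemma~\ref{efmu}, and the fourth is $o(|\vec\kappa|)$ by the preceding lemma. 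Hence
\begin{equation*}
\mathcal P(\omega^{\vec\kappa})=\sum_{i=1}^k\kappa_i\mu_i^{\vec\kappa}+o(|\vec\kappa|).
\end{equation*}

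Combining this with the lower bound $\mathcal P(\omega^{\vec\kappa})\geq\sum_{i=1}^k\kappa_i q(\bar x_i)+o(|\vec\kappa|)$ from Lemma~\ref{efp} and the individual upper bound from Lemma~\ref{qleq} yields
\begin{equation*}
\sum_{i=1}^k\kappa_i\bigl(\mu_i^{\vec\kappa}-q(\bar x_i)\bigr)=o(|\vec\kappa|),\qquad\mu_i^{\vec\kappa}-q(\bar x_i)\leq o(1)\ \text{for each }i.
\end{equation*}
To pass from this weighted sum to a pointwise bound, fix $j\in\{1,\dots,k\}$ and isolate the $j$-th term. Each of the other summands satisfies $\kappa_i(\mu_i^{\vec\kappa}-q(\bar x_i))\leq \kappa_i\cdot o(1)\leq|\vec\kappa|\cdot o(1)=o(|\vec\kappa|)$, so
\begin{equation*}
\kappa_j\bigl(\mu_j^{\vec\kappa}-q(\bar x_j)\bigr)\geq o(|\vec\kappa|)-\sum_{i\neq j}\kappa_i\bigl(\mu_i^{\vec\kappa}-q(\bar x_i)\bigr)\geq -o(|\vec\kappa|).
\end{equation*}
Dividing by $\kappa_j$ and invoking the uniform lower bound $\kappa_j\geq (k\alpha)^{-1}|\vec\kappa|$ supplied by membership in $\mathbb K^\alpha$ gives $\mu_j^{\vec\kappa}-q(\bar x_j)\geq -o(1)$, which together with Lemma~\ref{qleq} proves the lemma.

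The main obstacle, as previewed, is the last step: without the $\alpha$-uniform cone assumption the identity only controls a weighted average of the discrepancies $\mu_i^{\vec\kappa}-q(\bar x_i)$, and a single $\kappa_i$ that is negligibly small compared with $|\vec\kappa|$ would allow the corresponding Lagrange multiplier to drift. The cone condition $\kappa_i\geq(k\alpha)^{-1}|\vec\kappa|$ is exactly what rules this out and converts the global identity into the desired componentwise estimate.
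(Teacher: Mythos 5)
Your proposal is correct and follows essentially the same route as the paper: the same energy decomposition $\mathcal P(\omega^{\vec\kappa})=\sum_i\kappa_i\mu_i^{\vec\kappa}+o(|\vec\kappa|)$ via Lemmas \ref{oo1}, \ref{efmu} and the $F_i$-estimate, combined with Lemmas \ref{efp} and \ref{qleq}, and the same final step of isolating the $j$-th term and dividing by $\kappa_j\geq(k\alpha)^{-1}|\vec\kappa|$ to pass from the weighted sum to the componentwise bound. Your explicit remark that the $\alpha$-uniform cone condition is precisely what prevents a negligibly small $\kappa_i$ from letting $\mu_i^{\vec\kappa}$ drift is exactly the role it plays in the paper's computation.
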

\begin{proof}
By Lemma \ref{qleq}, it suffices to prove
\[\mu_i^{\vec\kappa}\geq q(\bar x_i)+o(1).\]
We write
\begin{align*}
&\mathcal{P}(\omega^{\vec\kappa})\\
=&\frac{1}{2}\int_D\omega^{\vec\kappa}\mathcal{G}\omega^{\vec\kappa}dx+\int_Dq\omega^{\vec\kappa}dx
-\sum_{i=1}^k\Lambda_i(\kappa_i)\int_DF_i({\Lambda_i^{-1}(\kappa_i)}\omega_i^{\vec\kappa})dx\\
=&-\frac{1}{2}\int_D\omega^{\vec\kappa}\mathcal{G}\omega^{\vec\kappa}dx
+\sum_{i=1}^k\int_D\omega_i^{\vec\kappa}\phi_i^{\vec\kappa}dx
-\sum_{i=1}^k\Lambda_i(\kappa_i)\int_DF_i({\Lambda_i^{-1}(\kappa_i)}\omega^{\vec\kappa}_i)dx+\sum_{i=1}^k\kappa_i\mu_i^{\vec\kappa}\\
=&\sum_{i=1}^k\kappa_i\mu_i^{\vec\kappa}+\sum_{i=1}^k\int_D\omega_i^{\vec\kappa}\phi_i^{\vec\kappa}dx+o(|\vec\kappa|)\\
=&\sum_{i=1}^k\kappa_i\mu_i^{\vec\kappa}+o(|\vec\kappa|).
\end{align*}
Here we used Lemma \ref{efmu}. Combining Lemma \ref{efp} we obtain
\begin{equation}\label{fin}
\sum_{i=1}^k\kappa_i\mu_i^{\vec\kappa}\geq \sum_{i=1}^k\kappa_iq(\bar{x}_i)+o(|\vec\kappa|).
\end{equation}
Finally by Lemma \ref{qleq} and \eqref{fin} we deduce that
\begin{align*}\mu_i^{\vec\kappa}&=\frac{1}{\kappa_i}\left(\sum_{j=1}^k\kappa_j\mu_j^{\vec\kappa}-\sum_{j=1,j\neq i}^k\kappa_j\mu_j^{\vec\kappa}\right)\\
&\geq\frac{1}{\kappa_i}\left(\sum_{i=1}^k\kappa_iq(\bar{x}_i)+o(|\vec\kappa|)-\sum_{j=1,j\neq i}^k\kappa_jq(\bar x_j)\right)\\
&= q(\bar x_i)+o(1).
\end{align*}
This completes the proof of the lemma.

\end{proof}

\begin{lemma}
$\{x\in D\mid\mathcal G\omega^{\vec\kappa}(x)+q(x)-\mu_i^{\vec\kappa}\geq f_i^{-1}(1)\}=\varnothing$ if $|\vec\kappa|$ is sufficiently small. As a consequence, $\omega^{\vec\kappa}$ has the following form
\[\omega^{\vec\kappa}=\sum_{i=1}^k\Lambda_i(\kappa_i) f_i(G\omega^{\vec\kappa}+q-\mu_i^{\vec\kappa})I_{B_{r_0}(\bar x_i)}\]
if $|\vec\kappa|$ is sufficiently small.
\end{lemma}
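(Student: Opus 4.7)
The plan is to mirror the closing argument of Subsection~\ref{lmt} for the single--vortex case, now using the estimates from the multi--vortex setting. The two key inputs are Lemma~\ref{oo1}, which gives $\|\mathcal G\omega^{\vec\kappa}\|_{L^\infty(D)}=o(1)$, and Lemma~\ref{muuu}, which gives $\mu_i^{\vec\kappa}=q(\bar x_i)+o(1)$ for each $i\in\{1,\dots,k\}$.

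Recall that the previous lemma decomposes each component $\omega^{\vec\kappa}_i$ into a patch piece supported on $\{x\in B_{r_0}(\bar x_i)\mid \phi_i^{\vec\kappa}(x)\ge f_i^{-1}(1)\}$ and a profile piece supported on $\{x\in B_{r_0}(\bar x_i)\mid 0<\phi_i^{\vec\kappa}(x)<f_i^{-1}(1)\}$. What I need to show is that the patch piece vanishes once $|\vec\kappa|$ is small; equivalently, that $\phi_i^{\vec\kappa}(x)<f_i^{-1}(1)$ throughout $B_{r_0}(\bar x_i)$. Since each $\omega^{\vec\kappa}_i$ is supported in $B_{r_0}(\bar x_i)$, only the restriction of the super-level set to this ball is relevant for the structure formula for $\omega^{\vec\kappa}$.

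First I would shrink $r_0>0$ once and for all so that, in addition to the disjointness $B_{r_0}(\bar x_i)\cap B_{r_0}(\bar x_j)=\varnothing$ for $i\neq j$, one also has $q(x)\le q(\bar x_i)$ for every $x\in B_{r_0}(\bar x_i)\cap\overline D$. This choice is possible because each $\bar x_i$ is an isolated local maximum of $q$. With $r_0$ fixed in this way, write
\[\phi_i^{\vec\kappa}(x)=\mathcal G\omega^{\vec\kappa}(x)+\bigl(q(x)-q(\bar x_i)\bigr)+\bigl(q(\bar x_i)-\mu_i^{\vec\kappa}\bigr).\]
Bounding the first term by Lemma~\ref{oo1}, the second term by $0$ on $B_{r_0}(\bar x_i)\cap\overline D$, and the third by Lemma~\ref{muuu}, I conclude $\phi_i^{\vec\kappa}(x)\le o(1)$ uniformly on $B_{r_0}(\bar x_i)$. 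Since $f_i^{-1}(1)$ is a fixed positive constant independent of $\vec\kappa$, for all $|\vec\kappa|$ sufficiently small the strict inequality $\phi_i^{\vec\kappa}(x)<f_i^{-1}(1)$ holds throughout $B_{r_0}(\bar x_i)$. Hence the patch part of the decomposition disappears, and since $f_i(s)=0$ for $s\le 0$ by~(H1), the surviving profile term can be rewritten without the constraint $\phi_i^{\vec\kappa}>0$, yielding
\[\omega_i^{\vec\kappa}=\Lambda_i(\kappa_i)f_i(\mathcal G\omega^{\vec\kappa}+q-\mu_i^{\vec\kappa})I_{B_{r_0}(\bar x_i)}.\]
Summing over $i$ gives the desired representation of $\omega^{\vec\kappa}$.

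The main obstacle lies in the first step: one must confirm that $r_0$ can be selected independently of $\vec\kappa$ so that $q(x)\le q(\bar x_i)$ throughout $B_{r_0}(\bar x_i)\cap\overline D$ simultaneously for every $i$. This is precisely where the ``isolated local maximum'' hypothesis is used. Once this selection is in place, everything else reduces to plugging in the asymptotic bounds from Lemmas~\ref{oo1} and~\ref{muuu}, exactly as in the single--vortex analogue.
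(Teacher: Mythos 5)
Your argument is correct and is essentially the paper's own proof with the details written out: the paper likewise combines Lemma \ref{oo1}, Lemma \ref{muuu} and the fact that $f_i^{-1}(1)$ is a fixed positive constant independent of $\vec\kappa$, and the bound $q(x)\le q(\bar x_i)$ on $B_{r_0}(\bar x_i)\cap\overline{D}$ that you secure by shrinking $r_0$ is precisely the property the paper already relies on (implicitly) in the proof of Lemma \ref{qleq}, so you are entitled to it here. Your remark that only the restriction of the super-level set to $B_{r_0}(\bar x_i)$ is relevant is the right reading of the (slightly overstated) claim ``$\{x\in D\mid \mathcal G\omega^{\vec\kappa}(x)+q(x)-\mu_i^{\vec\kappa}\ge f_i^{-1}(1)\}=\varnothing$'', since the structure formula for $\omega^{\vec\kappa}$ only involves that ball.
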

\begin{proof}
Observe that for fixed $i$, $f_i^{-1}(1)$ is a positive number not depending on $\vec\kappa$. Combining Lemma \ref{muuu} we can easily get the desired result.
\end{proof}

\begin{lemma}\label{shrink}
For each $i\in\{1,\cdot\cdot\cdot,k\}$, $\{x\in D\mid \mathcal G\omega^{\vec\kappa}(x)+q(x)-\mu_i^{\vec\kappa}>0\}$ shrinks $\bar x_i$ as $|\vec\kappa|\to0^+.$
\end{lemma}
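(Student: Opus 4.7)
The plan is to mimic the proof of Lemma \ref{shrinkk} verbatim, using the estimate on $\mu_i^{\vec\kappa}$ from Lemma \ref{muuu} in place of Lemma \ref{mu}, and using the fact that $\bar x_i$ is an \emph{isolated} local maximum to replace the role played by $\mathcal S$. The key point is that the support of $\omega_i^{\vec\kappa}$ is contained in $B_{r_0}(\bar x_i)$, and the characterization from the bathtub principle forces $\omega_i^{\vec\kappa}(x)=0$ a.e.\ on $\{x\in B_{r_0}(\bar x_i)\mid \phi_i^{\vec\kappa}(x)\leq 0\}$; so what really matters is to control the set $B_i^{\vec\kappa}:=\{x\in B_{r_0}(\bar x_i)\mid \phi_i^{\vec\kappa}(x)>0\}$.

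First I would shrink $r_0$ (if necessary) so that, in addition to the disjointness condition $B_{r_0}(\bar x_i)\cap B_{r_0}(\bar x_j)=\varnothing$ for $i\neq j$, the point $\bar x_i$ is the \emph{unique} maximizer of $q$ on $\overline{B_{r_0}(\bar x_i)}\cap\overline{D}$. This is possible because $\bar x_i$ is an isolated local maximum and $q$ is continuous. With this choice, for every $\delta\in(0,r_0)$ there exists a constant $c(\delta)>0$ such that
\begin{equation*}
q(x)\leq q(\bar x_i)-c(\delta)\quad\text{for all }x\in\bigl(\overline{B_{r_0}(\bar x_i)}\setminus B_\delta(\bar x_i)\bigr)\cap\overline{D}.
\end{equation*}

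Next, I would argue by contradiction, exactly as in Lemma \ref{shrinkk}. Suppose the set $\{x\in D\mid \phi_i^{\vec\kappa}(x)>0\}$ does not shrink to $\bar x_i$; then there exist $\delta_0\in(0,r_0)$, a sequence $\vec\kappa_n\in\mathbb K^\alpha$ with $|\vec\kappa_n|\to 0^+$, and points $x_n\in D$ with $\phi_i^{\vec\kappa_n}(x_n)>0$ and $|x_n-\bar x_i|\geq\delta_0$. (Since outside $\bigcup_j B_{r_0}(\bar x_j)$ we have $\omega^{\vec\kappa}\equiv 0$, the statement is genuinely about the component near $\bar x_i$; one may reduce to $x_n\in B_{r_0}(\bar x_i)$ either by restricting the claim there, which is all that is needed for $\operatorname{supp}(\omega_i^{\vec\kappa})\subset B_{o(1)}(\bar x_i)$, or by observing that analogous bounds rule out accumulation near any other $\bar x_j$ through the disjointness of the balls.) Combining $\phi_i^{\vec\kappa_n}(x_n)>0$ with Lemma \ref{oo1} (which yields $\mathcal G\omega^{\vec\kappa_n}(x_n)=o(1)$) and Lemma \ref{muuu} (which yields $\mu_i^{\vec\kappa_n}=q(\bar x_i)+o(1)$) gives
\begin{equation*}
q(x_n)>\mu_i^{\vec\kappa_n}-\mathcal G\omega^{\vec\kappa_n}(x_n)=q(\bar x_i)+o(1),
\end{equation*}
so $\liminf_{n\to\infty}q(x_n)\geq q(\bar x_i)$. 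But the uniform gap above forces $q(x_n)\leq q(\bar x_i)-c(\delta_0)$, a contradiction for $n$ large. Hence no such $\delta_0$ exists, and $\{x\in D\mid \phi_i^{\vec\kappa}(x)>0\}$ (intersected with $B_{r_0}(\bar x_i)$) shrinks to $\bar x_i$ as $|\vec\kappa|\to 0^+$.

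There is essentially no obstacle here: all the hard work has already been absorbed into Lemmas \ref{oo1} and \ref{muuu}. The only modest subtlety, compared to Lemma \ref{shrinkk}, is that $\bar x_i$ is only an \emph{isolated local} maximum of $q$ rather than a global maximum, which is why $r_0$ must be chosen small enough to make $\bar x_i$ the unique maximizer on the closed ball before the continuity argument can produce the positive gap $c(\delta_0)$.
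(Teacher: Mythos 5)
Your proposal is correct and follows exactly the route the paper intends: the authors omit this proof entirely, stating only that it is ``quite similar to that given in Lemma \ref{shrinkk}'', and your argument is that contradiction argument adapted via Lemmas \ref{oo1} and \ref{muuu}. You in fact supply more care than the paper does, by shrinking $r_0$ so that $\bar x_i$ is the unique maximizer on $\overline{B_{r_0}(\bar x_i)}\cap\overline{D}$ (needed since $\bar x_i$ is only an isolated \emph{local} maximum) and by correctly observing that the relevant set is the one intersected with $B_{r_0}(\bar x_i)$, which is all that the support statement in Theorem \ref{multiple} requires.
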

\begin{proof}
  The proof is quite similar to that given in Lemma \ref{shrinkk} and therefore is omitted.
\end{proof}
\subsection{Proof of Theorem \ref{multiple}}
Now we can give the proof of Theorem \ref{multiple}.
\begin{proof}[Proof of Theorem \ref{multiple}]
We only need to show that $\omega^{\vec\kappa}$ is a weak solution to the vorticity equation if $|\vec\kappa|$ is sufficiently small.
This is similar to the proof of Theorem \ref{single}.

For any $\phi\in C_c^\infty(D)$, let $\Phi_s$ be determined by \eqref{ode}. Set $\omega_s(x):=\omega^{\vec\kappa}(\Phi_s(x)).$
Since $\Phi_s$ is area-preserving and smooth, taking into account Lemma \ref{shrink}, we get $\omega_s\in\mathcal N^{\vec\kappa}$ if $|\vec\kappa|$ is sufficiently small.
So we have
\begin{equation}\label{var00}
\frac{d\mathcal P(\omega_s)}{ds}\bigg|_{s=0}=0.
\end{equation}
Expanding $\mathcal P(\omega_s)$ at $s=0$ gives
\[\begin{split}
\mathcal P(\omega_s)
=\mathcal P(\omega^{\vec\kappa})+s\int_D\omega^{\vec\kappa}\nabla^\perp(\mathcal G\omega^{\vec\kappa}+q)\cdot\nabla\phi dx+o(s),
\end{split}\]
which together with \eqref{var00} gives the desired result.

\end{proof}


\phantom{s}
 \thispagestyle{empty}

\end{document}